\newtheorem{theorem}{Theorem}[section]
\newtheorem{lemma}[theorem]{Lemma}
\newtheorem{proposition}[theorem]{Proposition}
\newtheorem{definition}[theorem]{Definition}
\newtheorem{remark}[theorem]{Remark}
\def\sqr#1#2{\vbox{\hrule height .#2pt
\hbox{\vrule width .#2pt height #1pt \kern #1pt
\vrule width .#2pt}\hrule height .#2pt }}
\def\begi{\begin{itemize}}
\def\endi{\end{itemize}}
\def\bega{\begin{array}}
\def\enda{\end{array}}
\def\bel{\begin{equation}\label}
\def\eeq{\end{equation}}
\begin{document}

 \begin{center}
\textcolor{blue}{ARTICLE LINK:  http://www.tandfonline.com/doi/abs/10.1080/21680566.2015.1064793?journalCode=ttrb20
\\  PLEASE CITE THIS ARTICLE AS\\ 
Han, K., Piccoli, B., Szeto, W.Y., 2015. Continuous-time link-based kinematic wave model: Formulation, solution existence and well-posedness. Transportmetrica B: Transport Dynamics, DOI: 10.1080/21680566.2015.1064793.}
 \line(1,0){469}
 \end{center}
 
\title{Continuous-time link-based kinematic wave model: Formulation, solution existence and well-posedness}
\author{Ke Han$^{a}\thanks{e-mail: k.han@imperial.ac.uk;}$
\qquad Benedetto Piccoli$^{b}\thanks{e-mail: piccoli@camden.rutgers.edu;}$
\qquad W. Y. Szeto$^{c}\thanks{Corresponding author, e-mail: ceszeto@hku.hk;}$ \\\\
%EndAName
$^{a}$\textit{Department of Civil and Environmental Engineering, Imperial College London, UK}\\
$^{b}$\textit{Department of Mathematics, Rutgers University, Camden, USA}\\
$^{c}$\textit{Department of  Civil Engineering, the University of Hong Kong, China.}}
\date{}
\maketitle

\begin{abstract}
We present a continuous-time {\it link-based kinematic wave} model (LKWM) for dynamic traffic networks based on the scalar conservation law model \citep{Lighthill and Whitham, Richards}. Derivation of the LKWM involves the variational principle for the Hamilton-Jacobi equation  and junction models defined via the notions of demand and supply. We show that the proposed LKWM can be formulated as a system of {\it differential algebraic equations} (DAEs), which captures shock formation and propagation, as well as queue spillback. The DAE system, as we show in this paper,  is the continuous-time counterpart of the {\it link transmission model} \citep{LTM}. In addition, we present a solution existence theory for the continuous-time network model and investigate continuous dependence of the solution on the initial data, a property known as well-posedness. We test the DAE system extensively on several small and large networks and demonstrate its numerical efficiency. 
\end{abstract}

\section{\label{Intro}Introduction}
First-order scalar conservation law models of the LWR type \citep{Lighthill and Whitham, Richards} have been widely used in the traffic science and engineering literature, due to its relatively simple mathematical structure and the capability of capturing realistic traffic network phenomena such as shock waves and vehicle spillback. The LWR model describes the temporal-spatial evolution of vehicle density on a homogeneous link via a scalar conservation law of the form 
\begin{equation}\label{LWRPDE}
\partial_t\rho(t,\,x)+  \partial_x  f\big(\rho(t,\,x)\big)~=~0
\end{equation}

\noindent where the link is expressed as a spatial interval $[a,\,b]$;  $\rho(t,\,x)$ denotes vehicle density; $f(\cdot): [0,\,\rho^{jam}]\rightarrow [0,\,C]$ expresses vehicle flow as a function of time, and is called the fundamental diagram; $\rho^{jam}$ denotes the jam density; $C$ is the link flow capacity. 

Classical mathematical results on the first-order hyperbolic equations of the form \eqref{LWRPDE} can be found in \cite{Bbook}. For a detailed discussion of numerical schemes of conservation laws, we refer the readers to \cite{Godunov} and \cite{LeVeque}. The {\it cell transmission model} (CTM), which is a discrete version of the LWR model assuming trapezoidal fundamental diagram, is proposed by \cite{CTM1} and extended to traffic networks by \cite{CTM2}. Another discrete-time version of the LWR model, derived from the variational principle \citep{VT1, Newella} with a triangular fundamental diagram, is called the {\it link transmission model} \citep{LTM}. The LWR model is also related to the point queue models. \cite{GVM1} and \cite{GVM2} discuss the connection between the LWR model and the Vickrey model and propose a computation method based on the variational method.

Network extensions of the LWR model, or any of its discrete-time versions, require treatment of road junction models, which are typically articulated via the notion of {\it Riemann Solvers} (RS). Work in this regard includes but is not limited to \cite{Bretti et al, Chitour and Piccoli, CGP, D'Apice and Piccoli, GP2009,  Herty and Klar, HR1995, Jin 2010,  LK1999, LK2002} and \cite{Ni and Leonard 2005}. The Riemann Solver may take various forms, depending on specific assumptions made at the road intersection under consideration. Some common features of existing Riemann Solvers include the maximization of flow through the junction \citep{CTM2, CGP, GPbook}, the prescribed vehicle turning ratio \citep{HR1995, Jin 2010}, and the driving priorities \citep{CGP, GP2009}.

Notably, for the past few decades the LWR model, along with its variations, have evolved from ones developed to describe the relationship among aggregated quantities of flow, density and velocity \citep{Sumalee, Szeto2008}, to the generation of models that are network-based; provide critical information on speed, acceleration, location and concentration to allow accurate estimation of emissions \citep{ZYC, Zhu2013}, and treat spillback and gridlock phenomena \citep{spillback, HGPFY, LS2002, Szeto 2003, Szeto and Lo 2006}. Moreover, the LWR-based traffic models play a key role in dynamic traffic assignment models \citep{Balijepalli, FHNMY, MU, Szeto2011}. Despite their popularity in engineering applications, such a class of fluid models are mainly presented and computed in discrete time; and their qualitative properties pertaining to existence, uniqueness, and well-posedness \footnote{In the mathematical modeling of a physical system, the term {\it well-posedness} refers to the property of having a unique solution, and the behavior of that solution hardly changes when there is a slight change in the initial/boundary conditions.} are relatively less understood, especially when one visits the continuous-time domain of these models. This paper is among the first to bridge this gap by proposing a continuous-time formulation of the network-based kinematic wave model with simplified assumption on the fundamental diagram, and by analyzing its qualitative properties pertaining to solution existence and uniqueness, and well-posedness.

\subsection{The link-based kinematic wave model and the DAE system}

This paper proposes a continuous-time hydrodynamic model called the {\it link-based kinematic wave model} (LKWM) for networks based on triangular fundamental diagrams. We show that such a network model can be derived based on the variational theory studied in a number of papers \citep{ABP, CC1, CC2, VT1, Newella, Newellb, Newellc}. We approach such a continuous-time representation of the simplified kinematic wave model by invoking the concept of {\it separating shock} \citep{Bretti et al}, which divides the link into the congested region (corresponding to the left half of the fundamental diagram) and the uncongested region (corresponding to the right half of the fundamental diagram). It can be shown that such a separating shock is unique under some minor conditions (e.g. the link is initially empty) \citep{Bretti et al}. We show that although the variational principle cannot directly identify the time-varying location of the separating shock in the interior of the link, it can capture two special circumstances where the separating shock reaches the entrance or exit of the link. When the separating shock reaches either boundary of the link (in this case we say that the separating shock is latent), one needs to revise the link demand or supply, which otherwise stays constant and is equal to the flow capacity according to \cite{LK1999}. Therefore, in order to determine link demand and supply, it suffices to invoke the variational theory to detect the presence of latent separating shock.

We employ a set of binary variables to indicate the congested/uncongested state of the link entrance and exit.  Two {\it Riemann Solvers} \citep{GPbook} are introduced to describe, respectively, a merge and a diverge junction model. The Riemann Solvers,  when combined with the LKWM, leads to a DAE system representation of the dynamic network model of interest. The proposed DAE system is continuous-time in nature, and is capable of capturing some realistic traffic phenomena such as shock waves and queue spillback.  Moreover, the idea of developing a DAE system for the LWR-based network model is non-trivial as the LWR model is based on partial differential equations instead of ordinary differential equations. A more straightforward treatment of the continuous-time LWR-based network models is through {\it partial differential algebraic equations} (PDAE) system;  see \cite{Kachani and Perakis} and \cite{Perakis and Roels} for example.

The proposed DAE system eliminates  partial derivatives (i.e., derivative with respect to space) since it is derived from a variational theory for the Hamilton-Jacobi equation representation of the link dynamics. In addition, the proposed DAE system treats vehicle flows and cumulative curves as its primary variables; this is in contrast to some existing models that describes the evolution of vehicle densities and/or speeds (e.g., the LWR model and the CTM). This feature facilitates applications of the LKWM to traffic control problems where the control directly acts on vehicle flows, such as ramp metering and signal control. Moreover, the binary variables, when combined with a time-discretization of the LKWM, naturally lead to a set of linear constraints, which can be applied to network optimization problems formulated as mathematical programs; examples include traffic signal optimization problems  \citep{HGPFY, LHGFY} and system optimal dynamic traffic assignment problems \citep{Z}.

More importantly, the proposed continuous-time representation of the LWR model on networks is valuable for the modeling of traffic flow on networks for the following reasons. Unlike all discrete models, the definition and behavior of a continuous-time model is independent of time step size chosen or any numerical apparatus employed. It serves as the benchmark to all discrete approximations, such as the cell transmission model or the link transmission model, by predicting and dictating their behavior without resort to numerical computations, which are of course subject to the choice of discretization scheme and parameters. The continuous-time model is of crucial importance to the evaluation of the approximation efficacy of discrete models and the analysis of their convergence when the time grid is continuously refined. The study of continuous-time models, however, have been somewhat ignored in the current literature, with only a few references available \citep{Ban, Ma2014a, Ma2014b}; and this paper contributes to this line of research by enhancing the understanding of the roles played by continuous-time models and their relationship with well-known discrete models.

\subsection{Qualitative analyses of the network model}
Over the past two decades the traffic modeling community has seen a substantial growth of literature on modeling realistic traffic network phenomenon while maintaining a reasonable computability of the models; the reader is referred to the beginning of the introduction for a review of these results. Two observations are made based on these results: 1) most network models are discrete-time in nature; and 2) very few qualitative properties of these models, such as solution existence, uniqueness and well-posedness, have been analyzed. On the other hand, continuous-time models have recently received increased attention in  dynamic traffic network modeling, and {\it dynamic traffic assignment} (DTA), due to the still emerging mathematical paradigm of {\it differential variational inequality} (DVI)  and the associated computational tools not available in finite-dimensional analyses. The reader is referred to \cite{FKKR, FHNMY, FM2013} and \cite{HZF}  for a series of developments in network modeling that are facilitated by the DVI theory.

\subsubsection{Existence of solutions}

The existence of a continuous-time network solution is far more difficult to establish than when the model is presented in discrete time through straightforward bookkeeping (examples of these discrete-time models include the cell transmission model and the link transmission model). While it is well established that a single conservation law with compatible initial and boundary conditions has a unique entropy solution (we refer the reader to \cite{Bbook} for more discussions on the entropy solutions to scalar conservation laws), a network of road segments, which is viewed as a system of conservation laws with coupling boundary conditions, does not always admit a solution in the sense to be articulated later in Section \ref{secneLWR}.  A common technique for showing the existence and uniqueness of a solution of the network model is called the {\it wave front tracking} (WFT) method. This method is originally proposed by \cite{Dafermos} to constructs piecewise constant approximations of the weak solution to the scalar conservation law of the form \eqref{LWRPDE}. This is done by considering a piecewise affine fundamental diagram and piecewise constant initial/boundary conditions.  One of the primary purposes of the WFT method is to show existence of weak solutions by  means of successive refinements of the aforementioned approximations.  This procedure will be briefly explained in Section \ref{subsecwft}; we refer the reader to \cite{HR2002} for further elaborations. The wave front tracking algorithm is later used by \cite{CGP} to show existence of weak solutions on a road network; however, that existence result is established for a limited junction model in which the number of incoming links must not exceed the number of outgoing links. As we explain in Section \ref{secexistence}, the existence of solutions on the network depends heavily on the type of junction model and Riemann Solver employed. Therefore, the existence result in \cite{CGP} does not cover the merge junction considered in this paper, and we will fill this gap in this paper by providing existence result for networks consisting of the merge and diverge junctions.

In general, rigorous proofs of existence results, usually involving the wave front tracking method \citep {CGP}, are quite cumbersome. Notably, \cite{GP2009} provide sufficient conditions for the existence of network solutions by specifying a set of abstract properties to be satisfied by the junction models (Riemann Solvers). These sufficient conditions will be illustrated in detail and checked against the two specific junction models considered in this paper.

\subsubsection{Well-posedness and uniqueness of solutions}

Another important property of the network model investigated in this paper is well-posedness. A solution is well-posed if it changes continuously with respect to the given initial/boundary conditions of a network. Notice that the uniqueness of a solution, provided that it exists, should come as a natural consequence of well-posedness. Well-posedness is a desirable property for analytical models since it is difficult to understand and predict the behavior of an ill-posed system, it is even more challenging to numerically compute a solution that reflects the dynamics of the system in a meaningful way if well-posedness is not granted.

It should not come as a surprise that the well-posedness property, like existence, should depend on the specific junction type and model employed. This paper establishes well-posedness and solution uniqueness for traffic networks consisting of the two specific junctions to be elaborated in Section \ref{sectwors}. Such a result is established through the notion of generalized tangent vector, which is detailed in Section \ref{secgtv}. In particular, we provide a general framework for analyzing the continuous dependence of the weak solution at a junction on the initial conditions. And such a framework is applicable to other existing junction models and/or Riemann Solvers, leading to a class of well-posedness results for more general network models.

It should be noted that the well-posedness property is closely related to the continuity of the delay operator, which arises from the investigation of {\it dynamic traffic assignment} (DTA) or {\it dynamic user equilibrium} (DUE) problems. The delay operator, with some variations in terminology, refers to a nonlinear mapping between two subsets of certain Hilbert space; this mapping associates to a set of path departure rates its corresponding set of path travel times. Continuity of the delay operator is crucial for the existence of DUE \citep{existence} and convergence of DUE algorithms \citep{FKKR}. Evaluation of the delay operator is done through the {\it dynamic network loading} (DNL) procedure, which is performed on a network with given origin-destination pairs and established route choices of drivers, and is subject to flow propagation constraints, flow conservation constraints, and the {\it first-in-first-out} (FIFO) principle. Since a small change in the path departure rate(s) is propagated throughout the network via the link model (LWR model) and the junction model, it is crucial to study the well-posedness of both models. The former has been well established in the {\it partial differential equations} literature \citep{Bbook}; this paper aims at addressing the latter and provides some preliminary results and insights. However, it should be noted that  this paper deals only with junction models that assume constant vehicle turning rates; while the junction model in an DNL procedure relies on time-varying and endogenous vehicle turning ratios since each vehicle has a prescribed route to follow. Thus, well-posedness of the DNL model, and hence the continuity of the delay operator, cannot be directly analyzed using the proposed framework and will be pursued in another paper.

\subsection{Organization}
The rest of this paper is organized as follows. Section \ref{secneLWR} articulates the network extension of the classical LWR model, and describes in detail two types of junction models to be considered later in this paper. Section \ref{LLWR} presents a derivation of the continuous-time link-based kinematic wave model (LKWM) based on the variational theory. We also show that its time discretization leads to Yperman's link transmission model. Section \ref{secexistence} establishes global existence result for the network-based LWR model in connection with the two specific junction models. In Section \ref{secwellposedness}, we investigate the well-posedness of the junction models. Finally, Section \ref{secNumerical} provides a numerical result of the LKWM and Section \ref{secconclusion} offers some concluding remarks.

\section{Network extension of the LWR model}\label{secneLWR}
In this section, we recap the network extension of the scalar conservation law model originally articulated in \cite{HR1995}. This section provides precise definitions of a weak solution on the network and the two Riemann Solvers employed in this paper, which will be the central focus of our qualitative analyses presented later in Section \ref{secexistence} and Section \ref{secwellposedness}.

We consider a network consisting of only one junction with $m$ incoming links $I_1,\ldots, I_m$ and $n$ outgoing links $I_{m+1},\ldots, I_{m+n}$. For $k=1,\,\ldots,\,m+n$, let $\rho_k(t,\,x)$ be the density on link $I_k$.  On each link the traffic dynamic is governed by the LWR equation \eqref{LWRPDE}. In analogy to a weak solution of a single scalar conservation law \footnote{In the case of a single conservation law \eqref{LWRPDE}, a solution cannot be defined in the classical sense because $\rho(t,\,x)$ may be discontinuous due to the presence of shock waves. Instead, an alternative solution, called the weak solution, is defined through integrals. See \cite{Bbook} and \cite{Evans} for more details}, we define a weak solution at this junction. To do this, we define a set of test functions $\{\phi_k(t,\,x)\}_{k=1}^{m+n}$, such that each $\phi_k(t,\,x)$ is compactly supported in  $(0,\,+\infty)\times (a_k,\,b_k)$. In addition, such a set of functions are smooth across the junction; that is, 
$$
\phi_i(t,\, b_i)~=~\phi_j(t,\,a_j),\quad {\partial \phi_i\over \partial x}(t,\,b_i)~=~{\partial\phi_j\over\partial x}(t,\,a_j),\qquad \forall~~t\in(0,\,+\infty)
$$
\noindent for any $i=1,\ldots, m,~~j=m+1,\ldots, m+n$. 
\begin{definition}{\bf (Weak solutions)}
A collection of densities $\{\rho_{k}(t,\,x)\}_{k=1}^{m+n}$ are called  weak solutions at this junction if, for any set of test functions $\{\phi_k(t,\,x)\}_{k=1}^{m+n}$, the following holds
\begin{equation}\label{weakdef}
\sum_{k=1}^{m+n}\int_{0}^{+\infty}\int_{a_k}^{b_k}\left[\rho_k(t,\,x){\partial \over \partial t}\phi_k(t,\,x) + f_k\big(\rho_k(t,\,x)\big){\partial \over \partial x}\phi_k(t,\,x)\right]dx\,dt~=~0
\end{equation}
\end{definition}

\noindent Any weak solution $\{\rho_k(t,\,\cdot)\}_{k=1}^{m+n}$ satisfies the Rankine-Hugoniot condition:
\begin{equation}\label{juncons}
\sum_{i=1}^m f_i\big(\rho_i(t,\,b_i)\big)~=~\sum_{j=m+1}^{m+n} f_j\big(\rho_j(t,\,a_j)\big)
\end{equation}
which is a generalization of the R-H condition stated for the scalar conservation law \citep{Bbook}. Eqn.  \eqref{juncons} essentially represents the conservation of flow across the junction. Notice that \eqref{juncons} alone is not sufficient to isolate a unique weak solution for the junction, and thus additional conditions are needed. Such additional conditions are usually articulated through the Riemann Solvers, to be presented below.

\subsection{Definition of the Riemann Solvers}

The weak solution at a junction is best analyzed through a simplified problem, known as the {\it Riemann problem}, which is the building block for a complete junction model illustrated in the previous section. A Riemann problem at a junction is an initial value problem with constant density value on each of the incident links. Solution of the Riemann problem is determined by a Riemann Solver, which, for a given Riemann (constant) initial data at the junction, produces a set of boundary conditions so that a unique weak solution at the junction can be obtained by solving an initial-boundary value problem on each of the links.

We present in the following a more precise definition of the Riemann Solvers. Consider the following Riemann initial data: 
$$
\rho_k(0,\, x)~\equiv~\widehat \rho_k\in \left[0,\,\rho_{k}^{jam}\right]\qquad \forall x\in (a_k,\,b_k), \qquad  k\in\{1,\,\ldots,\,m+n\}
$$
where $(\widehat \rho_1,\,\ldots,\,\widehat \rho_{m+n})$ is a tuple of constant densities, and $\rho_k^{jam}$ denotes the jam density on link $I_k$. A Riemann Solver determines another constant tuple $(\overline \rho_1,\,\ldots,\,\overline\rho_{m+n})$ based on the Riemann initial data $(\widehat \rho_1,\,\ldots,\,\widehat \rho_{m+n})$ such that the weak solution at this junction is obtained by solving an initial-boundary value problem 
\begin{equation}\label{ibprob}
\begin{cases}
\partial_t \rho_i(t,\,x)+\partial_x f\big(\rho_i(t,\,x)\big)~=~0
\\
\rho_i(0,\,x)~=~\widehat\rho_i \qquad  \forall x\in(a_i,\,b_i)
\\
\rho_i(t,\,b_i)~=~\overline \rho_i \qquad  \forall t >0
\end{cases}
\qquad\hbox{and}\qquad
\begin{cases}
\partial_t \rho_j(t,\,x)+\partial_x f\big(\rho_j(t,\,x)\big)~=~0
\\
\rho_j(0,\,x)~=~\widehat\rho_j \qquad  \forall x\in(a_j,\,b_j)
\\
\rho_j(t,\,a_j)~=~\overline \rho_j \qquad  \forall t >0
\end{cases}
\end{equation}
on each link, where $i=1,\,\ldots,\,m$, $j=m+1,\,\ldots,\,m+n$. As a result, we have the following definition for the Riemann Solver.

\begin{definition}\label{chapDNL:lwrrsdef} {\bf (Riemann Solver)} A Riemann Solver for the junction  with $m$ incoming links and $n$ outgoing links is a mapping (here $\Pi$ represents the product of indexed quantities)
\begin{align*}
RS:~~\prod_{i=1}^{m+n} \left[0,\,\rho^{jam}_{i}\right] &~\longrightarrow~ \prod_{i=1}^{m+n}\left[0,\,\rho^{jam}_{i}\right]
\\
\big(\widehat\rho_{1},\,\ldots,\, \widehat\rho_{m+n}\big)&~\mapsto~\big(\overline\rho_1,\,\ldots,\,\overline\rho_{m+n}\big)
\end{align*}
so that the following holds:
\begin{enumerate}
\item[(i)] The weak solution at this junction consists of solutions of the initial-boundary value problems \eqref{ibprob}. 

\item[(ii)] The Rankine-Hugoniot condition holds:
$$
\sum_{i=1}^m f_i\big(\overline\rho_i\big)~=~\sum_{j=m+1}^{m+n}f_j\big(\overline\rho_j\big)
$$
\item[(iii)] The {\it consistency condition} holds:
$$
RS\big[RS[\widehat\rho]\big]~=~RS[\widehat\rho]
$$
\end{enumerate}

\end{definition}

\noindent A Riemann Solver usually involves certain driving behavior or control and management considerations. Examples include vehicle turning percentage \citep{CTM2}, driving priority or right-of-way \citep{CGP, CTM2}, and signal control \citep{HGPFY}. We will next introduce two specific Riemann Solvers, which will be the focus of our qualitative analyses presented later.

\subsection{Two specific Riemann Solvers}\label{sectwors}

We consider two simple junctions depicted in Figure \ref{figtwojunc}. We note that these two junction types are fundamental to the construction of more complicated junction types, and the modeling techniques employed for these two junctions are generalizable to junctions with different topologies.  
\begin{figure}[h!]
\centering
\includegraphics[width=0.7\textwidth]{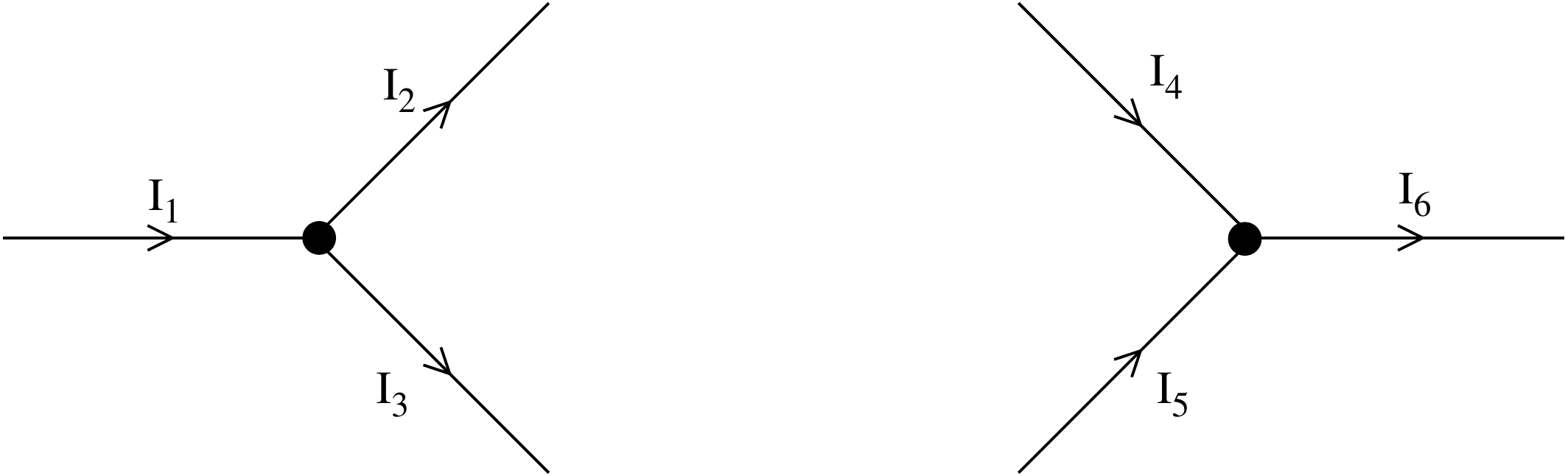}
\caption{The diverging (left) and merge (right) junctions.}
\label{figtwojunc}
\end{figure}

The definition of a Riemann Solver is greatly facilitated by the notions of demand and supply \citep{LK1999}. For each incoming link, we define its demand, denoted by $D(t)$, to be the maximum flow at which cars can leave this link. It is determined by the density value close to the exit of this link:
\begin{equation}\label{demanddef}
D(t)~=~\begin{cases}
C  \quad & \hbox{if}~~ \rho(t,\,b-)~\geq~\sigma 
\\
f\big(\rho(t,\,b-)\big)\quad & \hbox{if}~~ \rho(t,\,b-)~<~\sigma
\end{cases}
\end{equation}
where $C$ denotes the flow capacity of the link, $\sigma$ is the unique density at which the flow is maximized. Similarly, for each outgoing link, we define its supply $S(t)$ to be the maximum flow at which cars can enter this link:

\begin{equation}\label{supplydef}
S(t)~=~\begin{cases}
C  \quad & \hbox{if}~~ \rho(t,\,a+)~<~\sigma
\\
f\big(\rho(t,\,a+)\big)  \quad & \hbox{if}~~ \rho(t,\,a+)~\geq~\sigma
\end{cases}
\end{equation}

\subsubsection{Riemann Solver for the diverge junction}\label{secdiverge}

Consider the diverge node shown in Figure \ref{figtwojunc}, with one incoming link $I_1$ and two outgoing links $I_2$ and $I_3$. As usual, each $I_i$ is expressed as a spatial interval $[a_i,\,b_i]$; the  fundament diagram is $f_i(\cdot)$ with the flow capacity $C_i$. The demand function $D_1(t)$  and the supply functions, $S_2(t)$ and $S_3(t)$, are defined by \eqref{demanddef} and \eqref{supplydef} respectively.

The diverge junction model requires the knowledge of a (constant) vehicle turning ratios $\alpha_{1,2},\,\alpha_{1,3}>0$ such that $\alpha_{1,2}+\alpha_{1,3}=1$.  Notice that while this paper treats these turning ratios as constants (time-independent), they can be time-varying and may be determined either exogenously via empirical data, or endogenously via drivers' prescribed route choices. The latter is particularly relevant to the notion of {\it dynamic network loading} arising from {\it dynamic traffic assignment} study. The Riemann Solver of interest is first proposed in \cite{CTM2} based on two assumptions:
\begin{itemize}
\item[(A1)] Vehicles leaving the incoming link distribute to the outgoing links according to some given turning ratio.
\item[(A2)] Subject to  (A1), the flow through the junction is maximized.  
\end{itemize}
Moreover, we have the following obvious constraints:
\begin{equation}\label{qD}
q_{out,1}\leq D_1(t),\qquad q_{in,2}(t)\leq S_2(t), \qquad q_{in,3}(t)\leq S_3(t)
\end{equation} 
Solving (A1), (A2) and \eqref{qD} together yields
\begin{equation}\label{fifodiv}
\begin{array}{l}
q_{out,1}(t)~=~\displaystyle\min\left\{D_1(t),~{S_2(t)\over \alpha_{1,2}},~{S_3(t)\over\alpha_{1,3}}\right\}
\\
q_{in,2}(t)~=~\alpha_{1,2} \cdot q_{out,1}(t)
\\
q_{in,3}(t)~=~\alpha_{1,3}\cdot q_{out,1}(t)
\end{array}
\end{equation}
where $q_{out,1}(t)$ denotes the exit flow on link $I_1$, and $q_{in,i}(t)$ denotes the inflow on link $I_i$, $i=2,\,3$.

\subsubsection{Riemann Solver for the merge junction}\label{secmerge}

Consider the merge junction in Figure \ref{figtwojunc}, with two incoming links $I_4$ and $I_5$ and one outgoing link $I_6$. The demand and supply functions for these links are defined as before. In view of this simple merge junction, assumption (A1) becomes irrelevant; and assumption (A2) cannot determine a unique solution. More generally, as pointed out by \cite{CGP}, when the number of incoming links exceeds the number of outgoing links, (A1) and (A2) combined are not sufficient to find a unique solution. To address this issue, we invoke a right-of-way parameter $p\in(0,\,1)$ and the following driving priority rule:\\

\noindent (R1) The actual link exit flows, $q_{out,4}$ and $q_{out,5}$, satisfy 
$$
p\cdot q_{out,4}~=~q_{out,5}
$$
\noindent for some right-of-way parameter $p\in(0,\,1)$.\\

\noindent However, notice that the rule (R1) may be incompatible with assumption (A2). See Figure \ref{figmerge} for an illustration.

\begin{figure}[h!]
\centering
\includegraphics[width=0.75\textwidth]{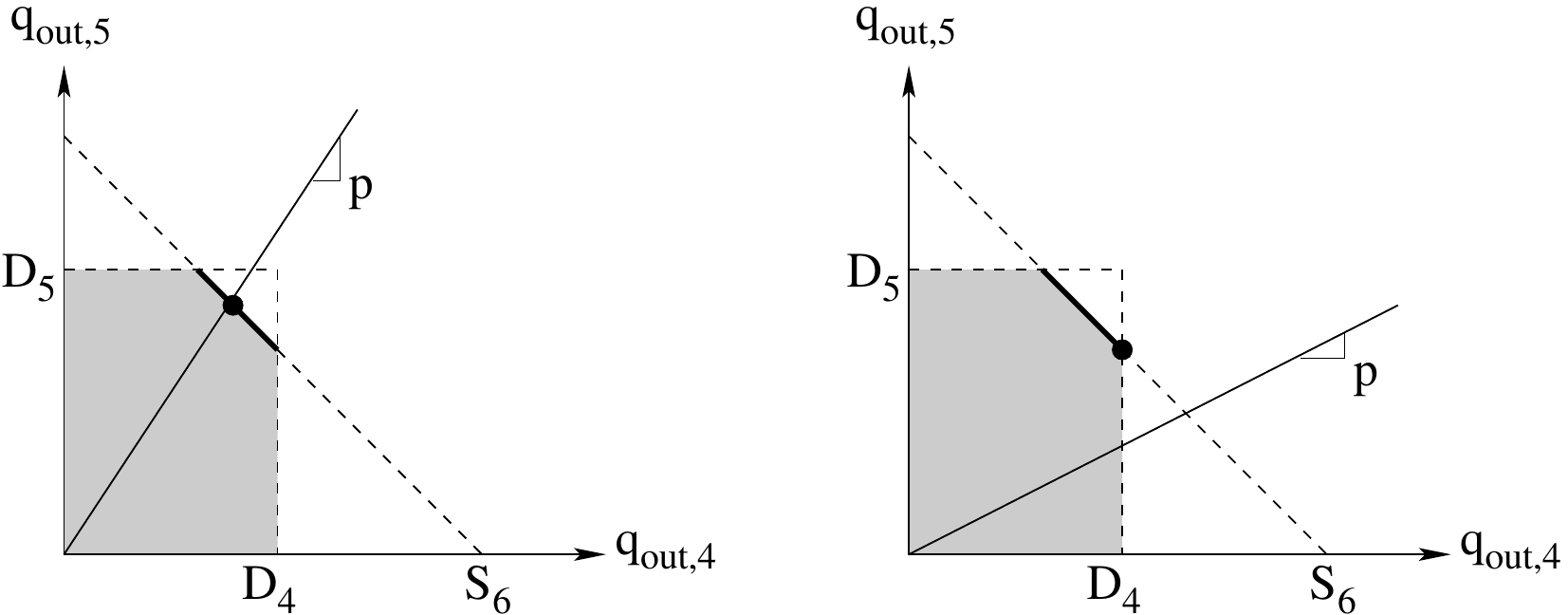}
\caption{Left: Rule (R1) is compatible with (A2). There exists a unique point satisfying both (A2) and (R1). Right: Rule (R1) is incompatible with (A2); in this case, the Riemann Solver selects, among all points that maximizes the flow, the point that is closest to the line through the origin with slope $p$.}
\label{figmerge}
\end{figure}

Whenever there is a conflict between (R1) and (A2), we will respect (A2) and relax (R1) so that the solution is chosen to be the point that approximates (R1) best among the points yielding the maximum flow. Moreover, such a solution is unique. More precisely, we let $\Omega$ be the set of 2-tuples $(q_{out,4},\,q_{out,5})$ that solves the following maximization problem:
$$
\max q_{out,4} + q_{out,5}
$$
such that 
$$
q_{out,4}\in[0,\,D_4],\qquad q_{out,5}\in[0,\,D_5],\qquad q_{out,4}+q_{out,5}~\leq~S_6
$$
Moreover, we define the set $P\doteq \{(q_{out,4},\,q_{out,5})\in\mathbb{R}_+^{2}:~p\cdot q_{out,4}=q_{out,5}\}$. Then, the solution of the merge junction model is defined to be the projection of $P$ onto $\Omega$; that is,
\begin{equation}\label{RSmergedef}
(q^*_{out,4},\,q^*_{out,5})~=~\underset{(q_{out,4},\,q_{out,5})\in\Omega} {\hbox{argmin}} D\left((q_{out,4},\,q_{out,5}),\,P\right)
\end{equation}
where $D\left((q_{out,4},\,q_{out,5}),\,P\right)$ denotes the distance of a point from a set $P$ and is defined to be 
$$
D\left((q_{out,4},\,q_{out,5}),\,P\right)~=~\min_{(x,\,y)\in P}\left\|(x,\,y)-(q_{out,4},\,q_{out,5})\right\|
$$ 
and the above norm is in the Euclidean sense.

\begin{remark}\label{remarkmergemono}
An inspection of Figure \ref{figmerge} reveals that the flow through the merge junction is monotonically increase with respect to $D_4$, $D_5$ and $S_6$.
\end{remark}

\section{Formulation of the  link-based kinematic wave model on networks}\label{LLWR} 
We derive in this section the continuous-time kinematic wave model (LKWM) on a road network, by assuming the triangular fundamental diagram. A more general LKWM with arbitrary concave fundamental diagram is presented in another paper \citep{spillback}. As we explain, the proposed LKWM does not involve any density variables; instead, it employs vehicle flows with some binary variables to capture the propagation of kinematic waves through junctions. The derivation of the LKWM employs the Lax-Hopf formula \citep{ABP, VT1, VT2}, which captures the interaction of a separating shock on a link with the boundaries of that link and its adjacent junctions. We make note of the fact that a forward time-discretization of the LKWM leads to the {\it link transmission model} (LTM) \citep{LTM}. This will be shown in Section \ref{subsecLTM}.

\subsection{Definition of the primary variables}\label{statevariables} 

We consider a homogenous link represented by an interval $[a,\,b]$, with $b-a=L$ being the length of this link. In the formulation of the LKWM, we consider a binary variable $r(t,\,x)$, $(t,\,x)\in[0,\,+\infty)\times[a,\,b]$ which takes value zero if the traffic at $(t,\,x)$ is in the free-flow phase, which corresponds to the left half of the fundamental diagram (see Figure \ref{figfd1}); and takes value one if the traffic is in the congested phase, which corresponds to the right half of the fundamental diagram.

It is obvious from Figure \ref{figfd1} that a single flow value $q$ corresponds to two different density values and two different traffic states: (1) the free-flow phase ($r=0$), and (2) the congested phase ($r=1$).  Therefore, a pair $\big(q(t,\,x),\,r(t,\,x)\big)\in [0,\,C]\times \{0,\,1\}$ determines a unique traffic state. As a consequence, the following map is well-defined.
\begin{equation}\label{psidef}
\psi(\cdot): [0,\,C]\times \{0,\,1\}~\rightarrow ~ [0,\,\rho^{jam}],\qquad  ( q,\,r)\mapsto \rho
\end{equation}

\begin{figure}[h]
\centering
\includegraphics[width=0.45\textwidth]{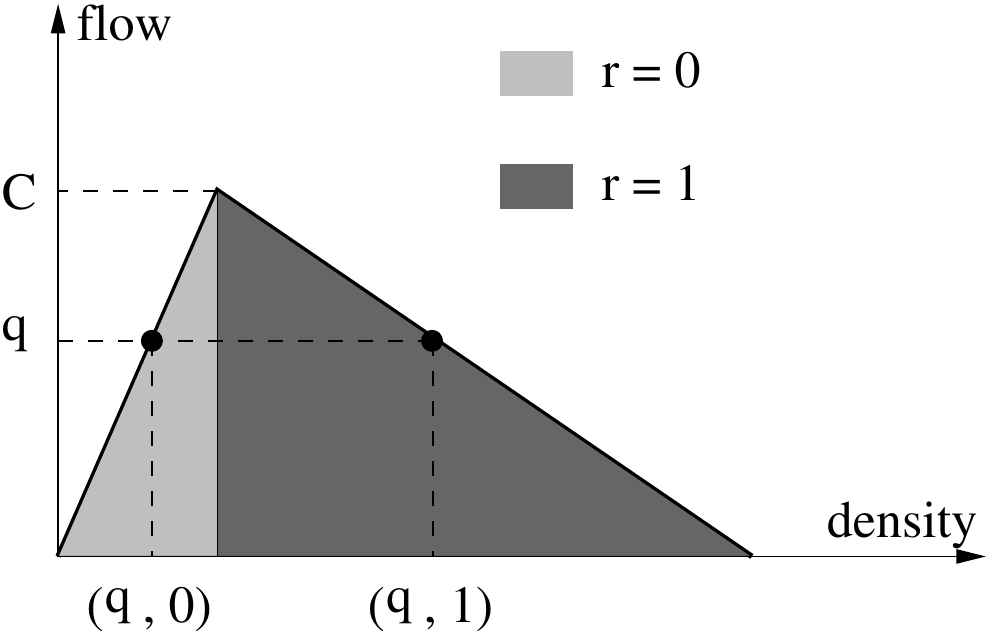}
\caption{Illustration of the variables $( q,\,r)$. }
\label{figfd1}
\end{figure}

\subsection{\label{shock} Shock formation and propagation within the link}
In section \ref{sectwors}, we express the Riemann Solvers for a merge and a diverge junction in terms of link demand and supply, which are effectively dependent on the binary variables $r$ and the flow variables $q$. The goal of this subsection is to establish the dynamics for $r$ and $q$, through the introduction of a separating shock and the Lax-Hopf formula.

The separating shock is a generalized characteristic \citep{Dafermos}), which divides a link into two parts: the free-flow region where $r=0$ and the congested region where $r=1$.  We note the fact that if a link is initially empty, i.e. $\rho(0,\,x)\equiv 0$, then there can be at most one separating shock present inside this link.

\begin{lemma}\label{sshocklemma}
Given a link expressed as a spatial interval $[a,\,b]$, consider a weak entropy solution on this link expressed using the new variables $\big(q(t,\,x),\,r(t,\,x)\big)$ with $q(0,\,x)\equiv 0$ and $r(0,\,x)\equiv 0$ for all $x\in[a,\,b]$. Then, the following statement holds.
\begin{enumerate}
\item For every $t\geq 0$, there exists at most one $x^*(t)\in(a,\,b)$ such that $r(t,\,x^*(t)-)<r(t,\,x^*(t)+)$
\item For all $x\in(a,\,b)$, 
\begin{align*}
&r(t,\, x)~=~0,\qquad \hbox{if}~~ x~<~x^*(t)\\
&r(t,\,x)~=~1,\qquad \hbox{if}~~ x~>~x^*(t)
\end{align*}
Thus, $x^*(t)$ represents the location of the separating shock.
\end{enumerate}
\end{lemma}
\begin{proof}
See \cite{Bretti et al}.
\end{proof}

\begin{remark}
The uniqueness of a separating shock can be assured with conditions slightly less restrictive than zero initial condition; that is, when there exists at most one connected free-flow region and at most one connected congested region at $t=0$. These results also apply to other types of fundamental diagrams. 
\end{remark}

According to Lemma \ref{sshocklemma}, if the link is initially empty, the separating shock emerges from the downstream boundary $x=b$ at certain point in time, and propagates towards the interior of the link. The speed of this separating shock is given by the  Rankine-Hugoniot condition \citep{Evans}, which states that the speed of the shock is equal to the ratio of the jump in flow and the jump in density across the shock; see Figure \ref{figsshock} for an illustration of this condition.
\begin{figure}[h!]
\centering
\includegraphics[width=.7\textwidth]{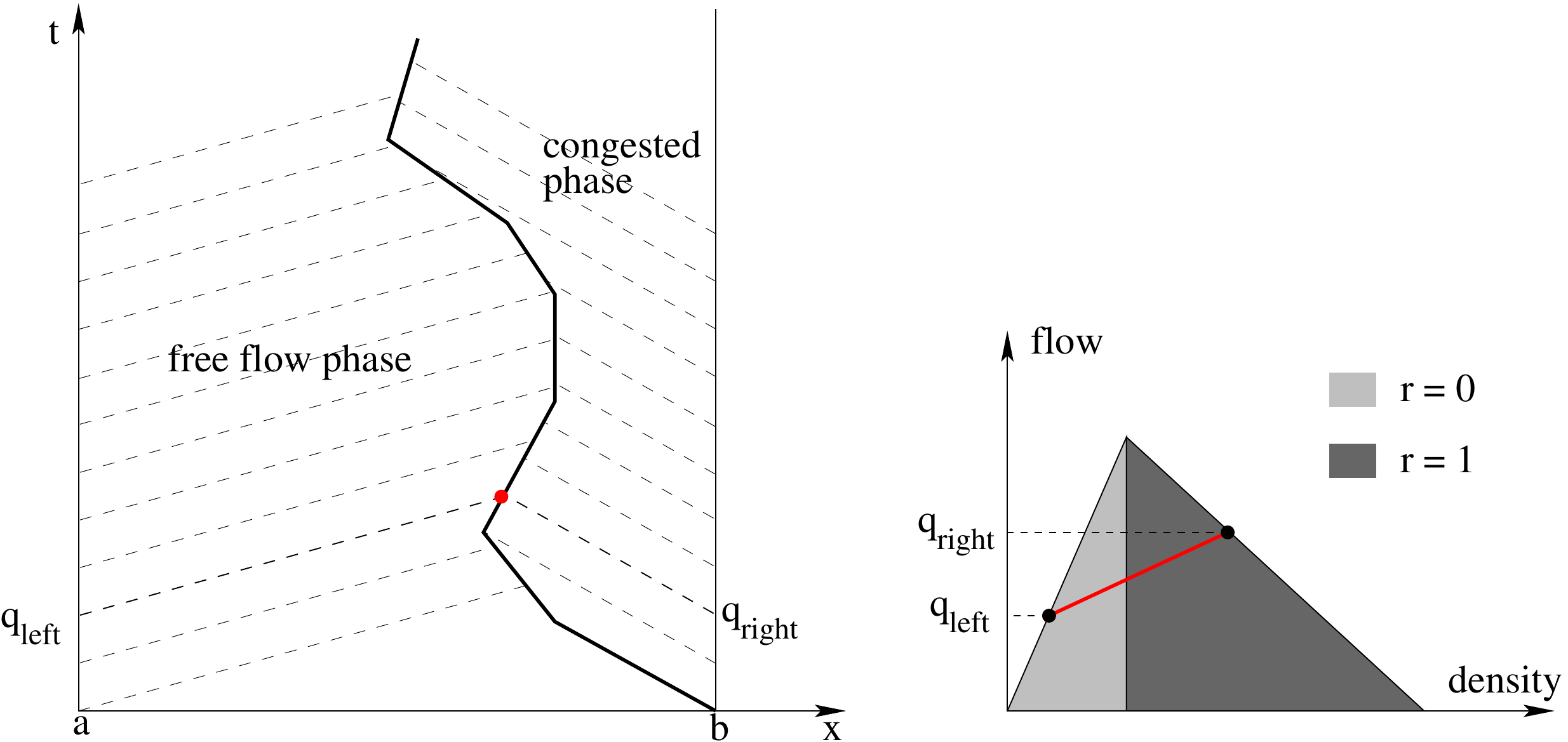}
\caption{Example of the separating shock. Left figure: the temporal-spatial domain separated by the separating shock. Right figure: the speed of the separating shock, ${d\over dt}x^*(t)$, is given by the Rankine-Hugoniot condition, i.e. the slope of the line segment on the fundamental digram that connects $(q_{left},\,0)$ and $(q_{right},\,1)$. }
\label{figsshock}
\end{figure}

It is clear that as long as the separating shock remains in the interior of the link $I_i$, the entrance of the link remains in the free-flow phase and the exit remains in the congested phase. As a result, the link demand and supply remain constants and are equal to the flow capacity. In this case, there is no interaction between the two boundaries of the same link, nor is there interaction between the upstream junction and the downstream junction of this link. 

On the other hand, if the separating shock reaches either boundary, for which we call the {\it latent shock}, the two boundaries become interdependent. More specifically, we distinguish between the following two extreme cases.
\begin{itemize}
\item[i)] The separating shock reaches the downstream boundary, i.e. $x^*(t)=b$. In this case, the link is entirely in the free-flow phase. Consequently, a characteristic line with slope $k$ connects the left and the right boundaries (see Figure \ref{figextreme}), where $k$ denotes the speed of forward-propagating waves. Since the density value along a characteristic line remains constant, we have that
\begin{equation}\label{caseii}
q\left(t,\,b-\right)~=~q\left(t-{L \over v},\, a\right)
\end{equation}
where $L$ denotes the length of the link. 
 
\item[ii)] the shock reaches the upstream boundary, i.e. $x^*(t)=a$. In this case,  the link is dominated by the congested phase, and the condition at the downstream boundary directly affects the flow at the upstream boundary (see Figure \ref{figextreme}).
\begin{equation}\label{casei}
q\left(t,\,a+\right)~=~q\left(t-{L\over w},\, b\right)
\end{equation}
where $w$ denotes the speed of backward-propagating waves.    
\end{itemize}

\begin{figure}[h!]
\centering
\includegraphics[width=.75\textwidth]{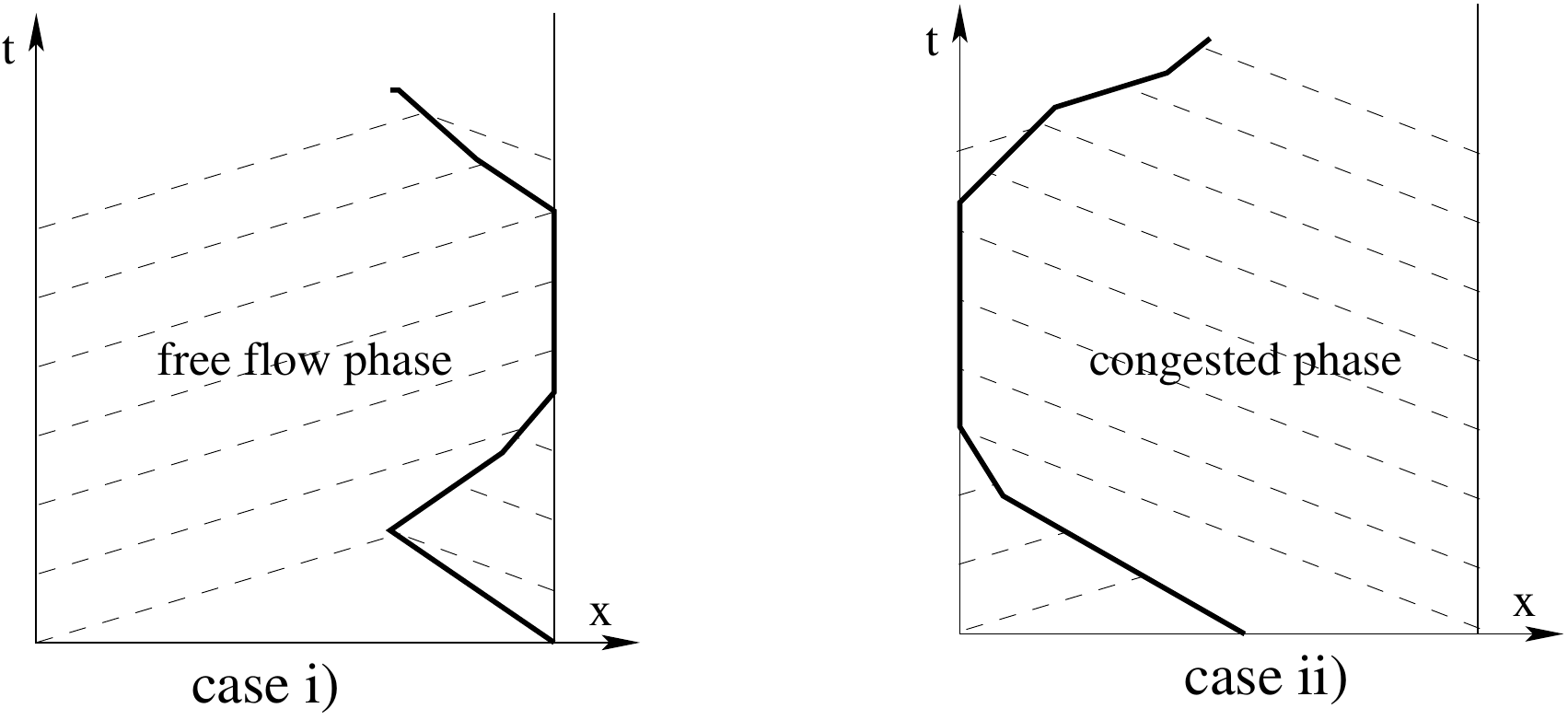}
\caption{Examples of latent separating shocks. Left: the separating shock reaches the right (downstream) boundary. Right: the separating shock reaches the left (upstream) boundary.}
\label{figextreme}
\end{figure}

In the presence of latent shocks, either the demand or supply of the link is determined by the flow variable at the other end of this link. When this happens, the interaction between the two boundaries, and hence the two junctions, begin to take place. In addition, the demand and supply of the link are no longer constants; as a result, the upstream junction and the downstream junction cannot be handled by their respective Riemann Solvers without influences from each other. Simply put, different junctions in the network may 'communicate' to each other in the current model, which is the main reason for the propagation of congestion through the network, queue spillback, and gridlock.

Due to the important role played by the aforementioned two extreme cases, we will next provide a method for detecting the presence of the latent separating shock using a variational method known as the Lax-Hopf formula.

\subsection{The Lax-Hopf formula approach for detecting latent shocks}\label{secLax}

The Lax-Hopf formula provides a variational formulation of solutions of the Hamilton-Jacobi equations, which describes the evolution of cumulative vehicle curves. The Lax-Hopf formula has several variations in the literature and can be independently derived from the theory of calculus of variations \citep{Evans}, the viability theory \citep{ABP, CC1}, and traffic flow theory \citep{VT1, Newella}.

Let us begin by introducing the Moskowitz function \citep{Moskowitz} or the N-curve \citep{Newella}, denoted by $N(\cdot,\,\cdot): [0,\,+\infty)\times[a,\,b]\rightarrow \Re_+$, such that
\begin{equation}\label{Ndef}
{\partial\over \partial x} N(t,\,x)~=~-\rho(t,\,x),\qquad{\partial\over \partial t} N(t,\,x)~=~f\big(\rho(t,\,x)\big)
\end{equation}
The function $N(t,\,x)$ measures the cumulative number of vehicles that have passed location $x$ by time $t$. The function $N(\cdot,\,\cdot)$ satisfies the following Hamilton-Jacobi equation:
\begin{equation}\label{HJeqn}
{\partial\over \partial t} N(t,\,x)-f\left( -{\partial\over\partial x} N(t,\,x)\right)~=~0\qquad (t,\,x)\in[0,\,+\infty)\times [a,\,b]
\end{equation}
where $f(\cdot)$ denotes the fundamental diagram, which, in the remainder of this section, is assumed to be triangular:
\begin{equation}\label{deltafd}
f(\rho)~=~\begin{cases}
v\,\rho\qquad & \rho\in[0,\,\sigma]\\
-w\,(\rho-\rho^{jam})\qquad & \rho\in(\sigma,\,\rho^{jam}]
\end{cases}
\end{equation}
where $v>0$ is the speed of forward-propagating waves, and $w>0$ is the speed of backward-propagating waves; $\sigma$ denotes the unique density at which the flow is maximized. We further introduce the concave transformation of this  fundamental diagram.
\begin{equation}\label{legendre}
f^*(u)~\doteq~\sup_{\rho\in[0,\,\rho^{jam}]}\left\{f(\rho)-u\,\rho\right\}~=~C-\sigma u \qquad u\in[-w,\,v]
\end{equation}

Before we state the Lax-Hopf formula, we define the upstream and downstream boundary conditions that are consistent with the assumption of zero initial condition.

\begin{definition}{\bf (Boundary conditions for the H-J equation (\ref{HJeqn}))}\label{bdyconditions}
Consider a link with flow capacity $C$. The upstream boundary condition $N_{up}(\cdot): [0,\,+\infty)\rightarrow \Re_+$ and the downstream boundary condition $N_{down}(\cdot): [0,\,+\infty)\rightarrow \Re_+$ are both non-decreasing, Lipschitz continuous functions with Lipschitz constants $C$, and they satisfy $N_{up}(0)=N_{down}(0)=0$. 
\end{definition}

The last condition in this definition is to ensure zero initial condition on the link; that is, the link is initially empty. Notice that in Definition \ref{bdyconditions}, both conditions are imposed in the strong sense; that is, $N_{up}(\cdot)$ and $N_{down}(\cdot)$ are the time-integrals of the link inflow and outflow respectively. These are in contrast to the weak boundary conditions discussed by \cite{ABP} and \cite{HGPFY}. The next result is a special case of the Lax-Hopf formula with a triangular fundamental diagram and zero initial condition. Its proof is derived and presented in a number of ways by \cite{Newella, CC2, VT1} and \cite{HGPFY}.

\begin{proposition}\label{LHprop} {\bf (The Lax-Hopf formula)}
Given upstream and downstream boundary conditions $N_{up}(\cdot)$ and $N_{down}(\cdot)$ as in Definition \ref{bdyconditions}, the solution of the H-J equation \eqref{HJeqn} is given by
\begin{equation}\label{simpleLH}
N(t,\,x)~=~\min\left\{N_{up}\left(t-{x-a\over v}\right)\,,\quad N_{down}\left(t-{b-x\over w}\right)+\rho^{jam}(b-x)   \right\}
\end{equation}
$\forall  (t,\,x)\in[0,\,+\infty)\times[a,\,b]$.
\end{proposition}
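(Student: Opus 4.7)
The plan is to specialize the general Lax--Hopf formula (\ref{LHformula}) to the case $\Omega_i=[0,+\infty)\times\{a_i,b_i\}$ and then carry out the minimization in closed form, exploiting both the explicit Legendre transform $f_i^*(u)=C_i-\rho_i^*u$ and the Lipschitz bound on the boundary data. The condition $(t-T,\,x-Tu)\in\Omega_i$ splits naturally into two sub-problems: either $x-Tu=a_i$ with value $c_i(t-T,a_i)=N_{up,i}(t-T)$, or $x-Tu=b_i$ with value $c_i(t-T,b_i)=N_{down,i}(t-T)$, in both cases subject to $u\in[-w_i,k_i]$ and $t-T\ge 0$. The formula (\ref{simpleLH}) will then be the minimum of the two partial infima.

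In the upstream branch, parameterize by $T\ge(x-a_i)/k_i$ and $u=(x-a_i)/T\in(0,k_i]$. The objective becomes
\[
N_{up,i}(t-T)+T\bigl(C_i-\rho_i^*\,(x-a_i)/T\bigr)\;=\;N_{up,i}(t-T)+C_i\,T-\rho_i^*(x-a_i).
\]
Since $N_{up,i}$ is nondecreasing and $C_i$-Lipschitz, the map $T\mapsto N_{up,i}(t-T)+C_iT$ is nondecreasing in $T$, so the infimum is attained at the smallest admissible $T$, i.e.\ $T=(x-a_i)/k_i$ (corresponding to $u=k_i$). Using the identity $C_i=k_i\rho_i^*$ built into (\ref{deltafd}), the constant terms cancel and the branch reduces to $N_{up,i}\bigl(t-(x-a_i)/k_i\bigr)$. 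A symmetric computation handles the downstream branch: with $T\ge(b_i-x)/w_i$ and $u=(x-b_i)/T\in[-w_i,0]$ one gets
\[
N_{down,i}(t-T)+C_iT+\rho_i^*(b_i-x),
\]
again minimized at the smallest $T$ (i.e.\ $u=-w_i$) by the $C_i$-Lipschitz property of $N_{down,i}$. Using the identity $C_i=w_i(\rho_i^{jam}-\rho_i^*)$ read off from the triangular diagram, the combination $(b_i-x)(C_i/w_i)+\rho_i^*(b_i-x)$ collapses to $\rho_i^{jam}(b_i-x)$, yielding the second term in (\ref{simpleLH}).

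Taking the minimum of the two branches gives the stated formula. A small point to address is the case $t<(x-a_i)/k_i$ or $t<(b_i-x)/w_i$, where the argument inside $N_{up,i}$ or $N_{down,i}$ would be negative; since $N_{up,i}(0)=N_{down,i}(0)=0$ by Definition \ref{bdyconditions}, extending both functions by zero on $(-\infty,0)$ is consistent and reflects the fact that the influence of that boundary has not yet reached $(t,x)$. The main obstacle, and really the only nontrivial step, is verifying that the infimum in each branch is truly attained at the extreme characteristic speed $u=k_i$ or $u=-w_i$; this is where the Lipschitz bound $C_i$ on the boundary data is used crucially, together with the linearity of $T\mapsto C_iT$ coming from the affine Legendre transform $f_i^*$. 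All remaining manipulations are algebraic identities among $k_i$, $w_i$, $\rho_i^*$, $\rho_i^{jam}$, and $C_i$ dictated by (\ref{deltafd}).
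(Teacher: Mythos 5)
Your derivation is correct: both branch computations, the monotonicity of $T\mapsto N(t-T)+C_iT$ coming from the $C_i$-Lipschitz bound, and the two algebraic identities $C_i=k_i\rho_i^*$ and $C_i=w_i(\rho_i^{jam}-\rho_i^*)$ read off from the triangular diagram are exactly what is needed, and the zero-extension convention for negative arguments is the right way to handle points not yet reached by a boundary wave. The paper, however, does not actually prove this proposition: it cites Newell for the result and offers only a heuristic remark, namely that with a triangular fundamental diagram there are only two characteristic speeds, so the Lax--Hopf infimum reduces to comparing the two characteristic lines through $(t,x)$. Your argument is a rigorous, self-contained instantiation of Theorem \ref{LHthm} that substantiates precisely that remark; the step the paper leaves implicit --- why the infimum over each boundary branch is attained at the extreme speed $u=k_i$ or $u=-w_i$ rather than at some interior $u$ --- is exactly the step you isolate and justify via the Lipschitz bound in Definition \ref{bdyconditions}. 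What your route buys is a proof that does not appeal to the method of characteristics or to the cited reference at all, only to the stated variational formula and the convexity/Lipschitz structure; the paper's route buys brevity at the cost of leaving the attainment-at-the-extremes claim unverified.
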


\begin{remark}\label{ssremark}
In the expression of the solution $N(t,\,x)$, the two expressions to be compared on the right hand side are closely related to the location of the separating shock. Namely, for an arbitrary point $(t,\,x)$ in the domain,  if $N_{up}\left(t-{x-a\over v}\right)$ is strictly less than $N_{down}\left(t-{b-x\over w}\right)+\rho^{jam}(b-x)$, then the upstream boundary condition is active at $(t,\,x)$, which means that a characteristic line connects $(t,\,x)$ with some point on the left boundary (see case i) of Figure \ref{figextreme}). Similarly, if $N_{up}\left(t-{x-a\over v}\right) > N_{down}\left(t-{b-x\over w}\right)+\rho^{jam}(b-x)$, then a characteristic line connects $(t,\,x)$ with some point on the right boundary (see case ii) of Figure \ref{figextreme}). 
\end{remark}

Based on the observation provided in Remark \ref{ssremark}, we have the following result.

\begin{proposition}\label{propdetection}
Let $N(\cdot,\,\cdot): [0,\,+\infty)\times[a,\,b]$ be the unique solution of the Hamilton-Jacobi equation (\ref{HJeqn}) given by the Lax-Hopf formula \eqref{simpleLH} with upstream and downstream boundary conditions $N_{up}(\cdot),\,N_{down}(\cdot)$. Then the following statements hold:

\noindent 1.  For any $(t,\,x)\in[0,\,+\infty)\times[a,\,b]$,
\begin{align}
\label{latent1}
x~\leq~x^*(t)\quad \Longleftrightarrow \quad N_{up}\left(t-{x-a\over v}\right)~\leq~N_{down}\left(t-{b-x\over w}\right)+\rho^{jam}(b-x)\\
\label{latent2}
x~\geq~x^*(t)\quad \Longleftrightarrow \quad N_{up}\left(t-{x-a\over v}\right)~\geq~N_{down}\left(t-{b-x\over w}\right)+\rho^{jam}(b-x)
\end{align}
\noindent 2. In particular, for all $t\geq 0$,
\begin{align}
\label{latent3}
x^*(t)&~=~a\quad \Longleftrightarrow \quad N_{up}\left(t\right)~=~N_{down}\left(t-{L\over w}\right)+\rho^{jam}L\\
\label{latent4}
x^*(t)&~=~b\quad \Longleftrightarrow \quad N_{up}\left(t-{L\over v}\right)~=~N_{down}\left(t\right)
\\
\label{latent5}
a<x^*(t)&<b  \quad \Longleftrightarrow \quad  
\begin{cases}
N_{up}\left(t\right)~<~N_{down}\left(t-{L\over w}\right)+\rho^{jam}L
\\
N_{up}\left(t-{L\over v}\right)~>~N_{down}(t)
\end{cases}
\end{align}
\end{proposition}
\begin{proof}
\eqref{latent1} and \eqref{latent2} are both consequences of Remark \ref{ssremark}. That is, a point $x$ is to the left of the separating shock $x^*(t)$ (in other words, traffic at $x$ is in the free-flow phase) if and only if the upstream boundary condition in the Lax-Hopf formula \eqref{simpleLH} is active; on the other hand, the point $x$ is to the right of the separating shock (traffic at $x$ is in the congested phase) if and only if the downstream boundary condition is active. This establishes \eqref{latent1} and \eqref{latent2}.

As a special case of \eqref{latent1} and \eqref{latent2}, we deduce that $x^*(t)=a$ if and only if the downstream condition is active, i.e.,
\begin{equation}\label{latentproof1}
N_{up}\left(t\right)~\geq~N_{down}\left(t-{L\over w}\right)+\rho^{jam}L
\end{equation}
Similarly, $x^*(t)=b$ if and only if the upstream condition is active, i.e.,
\begin{equation}\label{latentproof2}
N_{up}\left(t-{L\over v}\right)~\leq~N_{down}(t)
\end{equation}
We next show that the following always hold: 
\begin{equation}\label{LHconst}
N_{up}(t)~\leq~N_{down}\left(t-{L\over w}\right)+\rho^{jam}L\qquad\hbox{and}\qquad  N_{up}\left(t-{L\over v}\right)~\geq~ N_{down}(t)
\end{equation}
Indeed, by taking $x=a$ and $x=b$ respectively in the Lax-Hopf formula \eqref{simpleLH}, we get
\begin{align*}
N_{up}(t)&~=~N(t,\,a)~=~\min\left\{ N_{up}\left( t\right),\quad N_{down}\left(t-{L\over w}\right)+\rho^{jam}L\right\}
\\
N_{down}(t)&~=~N(t,\,b)~=~\min\left\{ N_{up}\left(t-{L\over v}\right),\quad N_{down}(t)\right\}
\end{align*}
Thus there must hold that $N_{up}(t)\leq N_{down}\left(t-{L\over w}\right)+\rho^{jam}L$ and $N_{up}\left(t-{L\over v}\right)\geq N_{down}(t)$. In view of these two inequalities, conditions \eqref{latentproof1} and \eqref{latentproof2} are revised to be: 
\begin{align*}
x^*(t)&~=~a \quad \Longleftrightarrow\quad N_{up}(t)~=~N_{down}\left(t-{L\over w}\right)+\rho^{jam}L
\\
x^*(t)&~=~b \quad \Longleftrightarrow\quad N_{up}\left(t-{L\over v}\right)~=~N_{down}(t)
\end{align*}
which shows \eqref{latent3} and \eqref{latent4}. Condition \eqref{latent5} follows immediately from \eqref{latent3}, \eqref{latent4}, and \eqref{LHconst}.
\end{proof}

\begin{remark}
The double-queue model recently proposed by \cite{DQM1} relies on the concept of an upstream queue and a downstream queue that may co-exist on the same link. These two concepts corresponds to the first and the second terms in \eqref{simpleLH} respectively; see \cite{Ma2014b} for a continuous-time representation of the double-queue concept and its relationship with the variational representation \eqref{simpleLH}. Thus, the double-queue model may be interpreted as a special case of the LWR model with a triangular fundamental diagram. 
\end{remark}

\subsection{The DAE system}\label{secDAE}
The dynamic of the propagation of kinematic waves within a link has been described in Proposition \ref{propdetection} using some algebraic expressions. This is done by invoking the Lax-Hopf formula and the concept of the separating shock. The goal of this section is to extend our analyses to a network and thereby establish the DAE system formulation of the network-based LKWM.

Let us consider a general traffic network represented by a directed graph $G(\mathcal{A},\,\mathcal{V})$, where $\mathcal{A}$ and $\mathcal{V}$ denote the set of links and nodes in this network, respectively.   We begin by introducing the following notations.
\begin{framed}
\vspace{-0.2 in}
\begin{align*}
q_{in,i}(t): \qquad\qquad &\hbox{the flow at which vehicles enter link }I_i\in\mathcal{A}; \\
q_{out,i}(t): \qquad\qquad &\hbox{the flow at which vehicles exit link }I_i\in\mathcal{A}; \\
D_i(t): \qquad\qquad &\hbox{the demand of link }I_i\in\mathcal{A}; \\
S_i(t): \qquad\qquad &\hbox{the supply of link }I_i\in\mathcal{A}; \\
N_{up,i}(t): \qquad\qquad  & \hbox{the cumulative number of vehicles that have entered link } I_i\in\mathcal{A}; \\
N_{down,i}(t): \qquad\qquad & \hbox{the cumulative number of vehicles that have exited link } I_i\in\mathcal{A}; \\
\alpha_{i,j}:\qquad\qquad & \hbox{the proportion of of traffic leaving link } I_i \hbox{ that is entering link } I_j.
\end{align*}
\vspace{-0.2 in}
\end{framed}

\noindent For each $v\in\mathcal{V}$, denote by $\mathcal{I}^v$ and $\mathcal{O}^v$ the sets of incoming links and outgoing links, respectively. For the compactness of notation, we will encapsulate a given junction model (Riemann Solver) at node $v$ as the following conceptual mapping: 
$$
\left[ \big(q_{out,i}(t)\big)_{i\in\mathcal{I}^v}~,~ \big(q_{in,j}(t)\big)_{j\in\mathcal{O}^v}\right]~=~RS\left[\big(D_i(t)\big)_{i\in\mathcal{I}^v}~,~  \big(S_{j}(t)\big)_{j\in\mathcal{O}^v}\right]
$$
\noindent Note that such Riemann Solvers are expressed in terms of link demand/supply and inflow/outflow. The following DAE system summarizes our analysis of the network model presented so far.
\begin{align}\label{DAE1}
&{d\over dt}N_{up,i}(t)~=~q_{in,i}(t),\qquad {d\over dt}N_{down, i}(t)~=~q_{out,i}(t),\qquad I_i\in\mathcal{A}\\
\label{DAE2}
&D_i(t)=\begin{cases}
q_{in,i}\left(t-{L_i\over v_i}\right)\quad & \hbox{if}~~N_{up,i}\left(t-{L_i\over v_i}\right)=N_{down, i}(t)\\
C_i\quad & \hbox{if}~~N_{up,i}\left(t-{L_i\over v_i}\right)>N_{down, i}(t)
\end{cases},\qquad I_i\in\mathcal{I}^v\\
\label{DAE3}
&S_j(t)=\begin{cases}
q_{out,j}\left(t-{L_j\over w_j}\right)\quad &\hbox{if}~~ N_{up,j}(t)=N_{down,j}\left(t-{L_j\over w_j}\right)+\rho^{jam}_j\,L_j \\
C_j \quad & \hbox{if}~~ N_{up,j}(t)<N_{down,j}\left(t-{L_j\over w_j}\right)+\rho^{jam}_j\,L_j
\end{cases},\quad I_j\in\mathcal{O}^v\\
\label{DAE4}
& \left[ \big(q_{out,i}(t)\big)_{i\in\mathcal{I}^v}~,~ \big(q_{in,j}(t)\big)_{j\in\mathcal{O}^v}\right]~=~RS\left[\big(D_i(t)\big)_{i\in\mathcal{I}^v}~,~  \big(S_{j}(t)\big)_{j\in\mathcal{O}^v}\right]
\end{align}
\begin{equation}\label{DAE6}
q_{in, j}(t) ~=~\sum_{I_i\in \mathcal{I}^v}\alpha_{i,j}\,q_{out, i}(t),\qquad I_j\in \mathcal{O}^v,\quad v\in\mathcal{V}
\end{equation}
In this DAE system, $L_i$ denotes the length of the link $I_i$; $v_i$ and $w_i$ are, respectively,  the forward and backward wave speeds associated with the triangular fundamental diagram; $\rho^{jam}_i$ denotes the jam density of link $I_i$; and $C_i$ denotes the flow capacity of $I_i$. Identity \eqref{DAE1} expresses the relationship between vehicle flow and vehicle count; this is the only place in the DAE system where differentiations are invoked. Equations \eqref{DAE2} and \eqref{DAE3} apply Proposition \ref{propdetection} to determine the free-flow/congested phase at the boundaries of each link, and thereby establish link demand and supply according to \eqref{demanddef}-\eqref{supplydef}. \eqref{DAE4} determines the boundary conditions at each junction based on the information of link demand and supply. Finally, \eqref{DAE6} expresses the flow conservation and re-distribution at each junction.

\subsection{Time-discretization of the DAE system}\label{secdiscreteDAE}
This section provides a discretized version of the proposed DAE system, for the benefit of efficient computation. A numerical example of this discretized DAE system and some visualization of vehicle spillback will be presented in Section \ref{secNumerical}.  

In this section we adopt the naming convention that uses a superscript $k$ to indicate the discrete value of  a variable at the $k$-th time interval, where the time step is indicated as $\Delta t$. Moreover, for each link $I_i\in\mathcal{A}$, we let $\Delta^f_i \doteq \left[{L_i\over v_i \Delta t}\right]$, $\Delta_i^b\doteq \left[{L_i\over w_i\Delta t}\right]$, where $\left[ x\right]$ rounds a real number $x$ to the nearest integer. In other words, $\Delta_i^f$ (or $\Delta_i^b$) represents the number of time intervals needed for the forward (or backward) kinematic wave to traverse the link. The differentiation in \eqref{DAE1} is re-written as time-integration, which is approximated by a simple rectangular quadrature: 
$$
N_{up,i}^k~=~\Delta t \sum_{l=1}^k q_{in,i}^l,\qquad N_{down,i}^k~=~\Delta t \sum_{l=1}^k q_{out,i}^l
$$

\noindent The DAE system \eqref{DAE1}-\eqref{DAE6} can thus be written in discrete time as:
\begin{align}
D_i^k~=~&
\begin{cases}
q_{in,i}^{k-\Delta_i^f} \quad & \hbox{if}~~ \displaystyle \sum_{l=1}^{k-\Delta_i^f}q_{in,i}^l~=~\sum_{l=1}^k q_{out,i}^l
\\
C_i \quad   & \hbox{if}~~ \displaystyle \sum_{l=1}^{k-\Delta_i^f}q_{in,i}^l~>~ \sum_{l=1}^k q_{out,i}^l
\end{cases}
\qquad\qquad I_i\in\mathcal{I}^v
\\
S_j^k~=~&
\begin{cases}
q_{out,j}^{k-\Delta_j^b} \quad & \hbox{if}~~ \displaystyle \Delta t\sum_{l=1}^{k}q_{in,j}^l=\Delta t\sum_{l=1}^{k-\Delta_j^b}q_{out,j}^l +\rho_j^{jam}L_j
\\
C_j \quad & \hbox{if}~~ \displaystyle \Delta t\sum_{l=1}^{k}q_{in,j}^l<\Delta t\sum_{l=1}^{k-\Delta_j^b}q_{out,j}^l +\rho_j^{jam}L_j
\end{cases}
\qquad I_j\in\mathcal{O}^v
\\
&\left[ \big(q^{k+1}_{out,i}\big)_{i\in\mathcal{I}^v}~,~ \big(q^k_{in,j}\big)_{j\in\mathcal{O}^v}\right]~=~RS\left[\big(D^k_i\big)_{i\in\mathcal{I}^v}~,~  \big(S^k_{j}\big)_{j\in\mathcal{O}^v}\right]\qquad\forall v\in\mathcal{V}
\end{align}

\noindent Notice that some Riemann Solvers relies on the additional parameters related to vehicle turning ratio or driving priority, such as the simple merge and diverge junctions introduced in Section \ref{sectwors}. The vehicle turning ratios may be specified as exogenous parameters obtained from, for example, historical data \citep{CTM1}; they can be also determined endogenously within a dynamic network loading (DNL) model where drivers' route choices are given by the path departure rates \citep{FHNMY}. In the case of the latter, the determination of the vehicle turning ratios must be accompanied by the calculation of link travel times and guided by the {\it first-in-first-out} (FIFO) principle. The reader is referred to \cite{LWRcont} for a complete formulation of the DNL model.

\subsection{Relationship with the link transmission model}\label{subsecLTM}

\cite{LTM} propose the discrete-time  link transmission model (LTM) based on Newell's famous trilogy \citep{Newella, Newellb, Newellc}. In this section, we show the relevance of the LTM with the proposed continuous-time model. 

The LTM keeps track of link-specific variables (e.g. inflow, outflow, demand, and supply) at each time stamp $t^k$, $k=1,\,2\,\ldots$, with a step size denoted by $\Delta t$. It defines the maximum amount of vehicles (volume, not flow) that can be sent by a link $I_i$ during time interval $[t^k,\,t^k+\Delta t]$ to be
\begin{equation}\label{Sbdydiscrete}
\bar S_{i}(t^k)~\doteq~N_{up,i}\left(t^k+\Delta t-{L_i\over v_i}\right)- N_{down,i}(t^k)
\end{equation} 
where $N_{up,i}(\cdot)$ and $N_{down,i}(\cdot)$ denote the cumulative entering and exiting vehicle counts respectively. Notice that $\mathbb{S}_{i}(t^k)$ represents vehicle volume, not flow \footnote{Flow is the rate of change of volume. Their units are respectively vehicle per unit time, and vehicle.}, because of the discrete-time nature. The maximum amount of vehicles (in volume) that can be transmitted from $I_i$ during $[t^k,\, t^k+\Delta t]$ is bounded by
\begin{equation}\label{Slinkdiscrete}
\hat S_{i}(t^k)~=~C_i \cdot\Delta t
\end{equation}
where $C_i$ denotes the link flow capacity. As a result, the amount of vehicles (in volume) that can leave $I_i$ during  $[t^k,\,t^k+\Delta t]$ is the minimum of the two:
\begin{equation}\label{Sidiscrete}
\mathbb{S}_i(t^k)~=~\min\left\{\bar S_{i}(t^k),~\hat S_{i}(t^k)\right\}
\end{equation}
Similarly, for the receiving side of the link model, we have:
\begin{equation}\label{LTMreceiving}
\begin{array}{l}
\bar R_{i}(t^k)~=~N_{down,i}\left(t^k+\Delta t-{L_i\over w_i}\right)+\rho^{jam}_iL_i-N_{up,i}(t^k)
\\
\hat R_{i}(t^k)~=~C_i\cdot\Delta t
\\
\mathbb{R}_{i}(t^k)~=~\min\left\{\bar R_{i}(t^k),~\hat R_{i}(t^k)\right\}
\end{array}
\end{equation}
where $\rho^{jam}_i$ and $L_i$ denotes the jam density and length of link $I_i$, respectively. $w_i$ denotes the speed of backward kinematic waves. 

In the above formulation, the sending capacity $\mathbb{S}_i(t^k)$ and the receiving capacity $\mathbb{R}_i(t^k)$ both represent  traffic volume (in vehicle) in a single time interval; they are different from the demand and supply functions defined in \eqref{demanddef}-\eqref{supplydef}, which represent flow (in vehicle per unit time). Dividing both quantities by $\Delta t$, we get
\begin{align}
{\mathbb{S}_i(t^k)\over \Delta t}~=~&\min\left\{N_{up,i}\left(t^k+\Delta t-{L_i\over v_i}\right)- N_{down,i}(t^k)~,~C_i\right\}  \nonumber
\\
~=~& \min\left\{{N_{up,i}\left(t^k+\Delta t-{L_i\over v_i}\right) -N_{up,i}\left(t^k-{L_i\over v_i}\right) +    N_{up,i}\left(t^k-{L_i\over v_i}\right)  - N_{down,i}(t^k)\over \Delta t}~,~C_i\right\} \nonumber
\\
~=~&\min\left\{f_{in,i}\left(t^k-{L_i\over v_i}\right)+  {  N_{up,i}\left(t^k-{L_i\over v_i}\right)  - N_{down,i}(t^k)\over \Delta t}~,~C_i  \right\}  \nonumber
\\
~=~&
\begin{cases}
f_{in,i}\left(t^k-{L_i\over v_i}\right)\qquad & \hbox{if }~   N_{up,i}\left(t^k-{L_i\over v_i}\right) ~=~ N_{down,i}(t^k)
\\
C_i \qquad  & \hbox{if }~ N_{up,i}\left(t^k-{L_i\over v_i}\right) ~>~ N_{down,i}(t^k)
\end{cases}\qquad \hbox{as }~\Delta t \to 0
\end{align}
and 
\begin{align}
{\mathbb{R}_i(t^k)\over \Delta t}
~=~&\min \left\{  {N_{down,i}\left(t^k+\Delta t-{L_i\over w_i}\right)+\rho^{jam}_iL_i-N_{up,i}(t^k) \over \Delta t}~,~C_i\right\}  \nonumber
\\
~=~&\min \left\{ {N_{down,i}\left(t^k+\Delta t-{L_i\over w_i}\right)    
- N_{down,i}\left(t^k-{L_i\over w_i}\right) \over \Delta t} \right.   \nonumber
\\
&+\left.{N_{down,i}\left(t^k-{L_i\over w_i}\right)+\rho^{jam}_iL_i-N_{up,i}(t^k)\over \Delta t}~,~C_i\right\}   \nonumber
\\
~=~&
\begin{cases}
f_{out,i}\left(t^k-{L_i\over w_i}\right)\qquad &  \hbox{if }~ N_{down,i}\left(t^k-{L_i\over w_i}\right)+\rho_i^{jam}L_i=N_{up,i}(t^k)
\\
C_i  \quad & \hbox{if }~ N_{down,i}\left(t^k-{L_i\over w_i}\right)+\rho_i^{jam}L_i>N_{up,i}(t^k)
\end{cases} \quad  \hbox{as }~\Delta t \to 0
\end{align}
which are recognized as the continuous-time formulations of the demand and supply functions shown in \eqref{DAE2} and \eqref{DAE3}, respectively. Notice that in deriving the above, we have used the forward discretization scheme, also known as the explicit scheme:
\begin{align*}
f_{in,i}\left(t^k-{L_i\over v_i}\right)~\approx~&{N_{up,i}\left(t^k+\Delta t-{L_i\over v_i}\right) -N_{up,i}\left(t^k-{L_i\over v_i}\right)\over \Delta t}
\\
f_{out,i}\left(t^k-{L_i\over w_i}\right)~\approx~& {N_{down,i}\left(t^k+\Delta t-{L_i\over w_i}\right)    
- N_{down,i}\left(t^k-{L_i\over w_i}\right) \over \Delta t}
\end{align*}
Therefore, the LTM can be regarded as a special case of the proposed DAE system with forward time-discretization scheme.

\begin{remark}
The continuous-time DAE system, although does not facilitate computation  directly unless it is discretized, is valuable for the following reasons. (1) It describes the underlying network model of which various existing discrete models, such as the cell transmission model \cite{CTM1, CTM2} and the link transmission model \cite{LTM}, are approximations. Thus it provides a direct reference for analyzing the numerical behavior of these discrete models and their convergence when the time grid is refined. (2) The continuous-time version admits several discretization schemes, and the LTM is just one of them. In particular, one could introduce backward or central discretization schemes \citep{LeVeque} to \eqref{DAE1} instead of the forward scheme, which gives rise to new models not studied before. 
\end{remark}

\vspace{0.3 in}

\section{Global existence of weak solutions}\label{secexistence}

This section provides an existence theory for  networks consisting of junctions depicted in Figure \ref{figtwojunc}, with Riemann Solvers  described in Section \ref{secdiverge} and Section \ref{secmerge}. As such, the network model assumes that the vehicle turning ratios at diverge junctions are fixed constants.

We show the existence of a network-wide weak solution based on the {\it wave front tracking} (WFT) algorithm  \citep{Dafermos1972, CGP, GPbook}. This algorithm constructs piecewise constant approximations of the weak solution to the scalar conservation law of the form \eqref{LWRPDE}. The WFT method has been used to show existence of solutions of single conservation laws as well as systems \citep{GPbook, HR2002}. This section provides existence result for networks consisting of merge and diverge junctions mentioned in Section \ref{sectwors}.

\subsection{A brief review of the wave front tracking method}\label{subsecwft}

To make our presentation more comprehensive and to be self-contained, this section briefly explains procedures of wave front tracking. Interested readers are referred to \cite{Bbook} and \cite{HR2002} for more details on conservation laws, and to \cite{GPbook} for the traffic network case.

Given a discretization parameter $\delta$ and initial-boundary conditions with {\it bounded variation} (BV), an approximate solution on the network of interest is constructed in the following way.

\begin{itemize}
\item Approximate the initial-boundary conditions by piecewise constant (PWC) functions, and solve the Riemann Problems (RPs) at discontinuities of these PWC functions and at junctions.
Approximate rarefaction waves by rarefaction shocks of size $\delta$.

\item Construct the PWC solution by piecing together the
solutions of the RPs up to the first time when two traveling waves (shocks) interact, or when a wave interacts with a junction.

\item Solve a set of new RPs created by the interactions and prolong the
solution up to next interaction time, and so on.
\end{itemize}

To ensure the feasibility of such a construction and hence the existence of the WFT approximate solution, it suffices to estimate and bound the number of waves and the number of interactions among themselves and with the junctions. This can be easily done in the scalar conservation law case since  the number of waves and the total variation of the approximate solution are both decreasing in time \citep{Bbook}. For the network case, one needs to carefully estimate the number of waves, the number of interactions, and the total variations, which are complicated by the interactions of waves with junctions, as well as ways in which such interactions occur, given the Riemann Solvers. Indeed, when a wave interacts with a junction from a link, it may produce new waves in all other links connected to the same junction; this is in contrast to the single conservation law case. We refer the reader to \cite{GPbook} for more elaboration on these insights.

Now consider a sequence of approximate solutions $\rho_\delta$ constructed
by a wave-front tracking algorithm mentioned above. If one can provide estimates on the total variation of the flow,
then as $\delta$ tends to zero
$\rho_\delta$ tends to a weak entropy solution 
on the whole network.
To this end, one typically employs the compactness in BV provided by Helly's Theorem
and the notion of weak formulation of equation \eqref{LWRPDE} \citep{Bbook,
GPbook}.

\subsection{Existence of a weak solution at a junction}

As we mentioned previously, the estimations of number of weaves and interactions are highly specific to the junction type and the Riemann Solver employed. Thus there is hardly any universal result on the existence of network solutions.  \cite{GP2009} are the first to provide sufficient existence conditions by stipulating a few abstract assumptions on the Riemann Solvers considered for the junctions. Our strategy for showing existence is by proving that the two Riemann Solvers presented in Section \ref{sectwors} satisfy the sufficient conditions given by \cite{GP2009}. We need the following straightforward terminologies.

\begin{definition}{\bf (Good and bad data)}\label{goodbaddef}
Fix an approximate wave front tracking solution $\rho_{\delta}$ and a junction $J$. An incoming link $I_i$ is said to have a good datum at $J$ at time $t$ if
$$
r_i(t,\,b_i-)~=~1,
$$
and it has a bad datum otherwise. Similarly, we say that an outgoing link $I_j$ has a good datum at $J$ at $t$ if
$$
r_j(t,\,a_j+)~=~0,
$$
and it has a bad datum otherwise. Here $r_i(t,\,x)$ and $r_j(t,\,x)$ are both binary variables indicating the free-flow/congested phase. 
\end{definition}

In \cite{GP2009}, the three sufficient conditions to be satisfied by a Riemann Solver to ensure the existence of a network solution are specified as follows.  
%\textcolor{red}{\huge Dr Piccoli, we'll need some mathematical expressions of these properties.}
In rough words such conditions can be expressed as follows:
\begin{itemize}
\item[(P1)] The solution of a Riemann Problem (RP) depends only on the values of bad data.

\item[(P2)] For waves interacting with a junction, the change in the flow variation (i.e. the sum of jumps in flow over all waves) due to the interaction is bounded, up to a multiplication by a constant, by the change in flow through the junction and by the flow jump of the interacting wave.

\item[(P3)] Interactions of waves bringing a flow decrease produce a decrease in the flow through the junction.
\end{itemize}

\noindent Let us now establish some precise mathematical interpretations of the three properties (P1)-(P3).\\

\noindent {\bf [Interpretation of P1]} Recalling Definition \ref{chapDNL:lwrrsdef}, 
we say that a Riemann solver $RS$ satisfies property (P1) if 
$$RS(\hat \rho_1,\ldots,\hat\rho_{m+n})=RS(\tilde\rho_1,\ldots,\tilde \rho_{n+m})$$
whenever, for every $i=1,\ldots,n+m$, either $\hat\rho_i=\tilde\rho_i$ or both $\hat\rho_i$
and $\tilde\rho_i$ are good data. In particular, only the values of bad data matters.\\

\noindent {\bf [Interpretation of P2]} Let us first define the flow through the junction at time $t$ as:
\begin{equation}\label{eq:Gamma}
\Gamma(t)=\sum_{i=1}^m f_i(\rho_i(t,b_i-),
\end{equation}
and consider an interacting wave, say from an incoming road $I_i$.  We use the superscripts $-$ and $+$ to indicate, respectively, variables before and after the interaction. Moreover,  
let $(\rho_i,\rho_i^-$) be the interacting wave \footnote{We use the notation $(\rho_i,\,\rho_i^-)$ to represent a wave with state $\rho_i$ behind the wave and state $\rho_i^-$ in front of the wave.}. 
The change in the {\it total variation} (TV) of the flow, due to the interaction, is given by:
\begin{equation}\label{TVdef}
\Delta TV(f)=\sum_{k\not= i}|f_k(\rho^+_k)-f_k(\rho^-_k)|
+|f_i(\rho_i)-f_i(\rho^+_i)| - |f_i(\rho_i)-f_i(\rho^-_i)|    
\end{equation}
Indeed, before the interaction we have the wave $(\rho_i,\rho_i^-)$ on road $I_i$. After the interaction
we have the wave $(\rho_i,\rho_i^+)$ on road $I_i$ and, for $k\not= i$, the wave
$(\rho^+_k,\rho^-_k)$ on road $I_k$.
Then (P2) holds if there exists $C>0$ (independent of the interacting wave) such that:
\begin{equation}
|\Delta TV(f)|\leq C\min\left\{\left|\Gamma(t+)-\Gamma(t-)\right|, |f_i(\rho_i)-f_i(\rho^-_i)| \right\} \qquad\forall i
\end{equation}

\noindent {\bf [Interpretation of P3]} To interpret (P3), consider again an interacting wave from any road.
Using the same notations for (P2),  (P3) requires that if $f(\rho_i)<f(\rho_i^-)$
then $\Gamma(t+)\leq\Gamma(t-)$.

\subsection{Existence result} 

\begin{lemma}\label{existencediverge}
The Riemann Solver for the diverge junction introduced in  Section \ref{secdiverge} satisfies properties (P1)-(P3), thus a weak solution of the Cauchy problem at such a junction exists. 
\end{lemma}
\begin{proof}
The proof is postponed until Appendix \ref{secapplemmaexistencediverge} to improve the readability of this paper.
\end{proof}

\begin{lemma}\label{existencemerge}
The Riemann Solver for the merge junction in Section \ref{sectwors} satisfies properties (P1)-(P3), thus a weak solution of the Cauchy problem at the junction exists. 
\end{lemma}
\begin{proof}
For the same reason as before, we will present the proof in Appendix \ref{secappexistencemerge} instead.
\end{proof}

\begin{theorem}{\bf (Global existence result for the initial value problem on a network)} 
For a network consisting of diverge and merge junctions whose Riemann Solvers are given by 
\eqref{fifodiv} and \eqref{RSmergedef} respectively, a weak solution on this network exists.
\end{theorem}
\begin{proof}
In order to extend the local existence results conveyed by Lemma \ref{existencediverge} and \ref{existencemerge} to a network, we note the fact that in a network described by a system of conservation laws, the propagation of any condition or perturbation has a finite speed, bounded by the speed of the kinematic waves.  Thus the local existence results lead to a global existence result. 
\end{proof}

\section{Well-posedness of initial value problems}\label{secwellposedness}

In the mathematical modeling of a physical system, the term {\it well-posedness} refers to the property of having a unique solution, and the behavior of that solution hardly changes when there is a slight change in the initial/boundary condition. Examples of well-posed problems include the Cauchy problem (initial value problem) for scalar conservation laws of the form \eqref{LWRPDE} \citep{Bbook}, and the Cauchy problem for Hamilton-Jacobi equations obtained by integrating \eqref{LWRPDE} \citep{VT2}.

\subsection{An introduction to generalized tangent vectors}\label{secgtv}

To prove  the well-posedness for the Cauchy problem on a junction where the dynamic on each adjacent link is governed by the scalar conservation law \eqref{LWRPDE}, 
one has several options such as Kruzhov's entropy conditions  \citep{Dafermos},
the Bressan-Liu-Yang functional \citep{Bbook}, and the generalized tangent vectors \citep{GPbook}. For our specific problem the generalized tangent vector approach turns out to be the most convenient. This approach will be explained below. 

Given a piecewise constant function $F(x) :[a,\,b]\to \mathbb{R}$, a {\it tangent vector} is
defined in terms of the shifts of the discontinuities of $f(\cdot)$.
More precisely, let us indicate by $\{x_i\}_{i=1}^N$ the discontinuities of $F$ where
$a=x_0< x_1 < \cdots <x_{N}< x_{N+1}=b$, and
by $\{F_i\}_{i=1}^N$ the values of $F$ on $(x_{i-1},\,x_i)$.
 A tangent vector of $F(\cdot)$ is a vector $\xi=(\xi_1,\ldots,\xi_N)\in \mathbb{R}^N$ such that 
for each $\epsilon>0$, one may define the corresponding perturbation of $F(\cdot)$, denoted by $F^{\epsilon}(\cdot)$ and given by
\[
F^{\epsilon}(x)=F_i\qquad x\in[x_{i-1}+\epsilon\xi_{i-1}, \, x_i+\epsilon\xi_i)
\]
for $i=1,\ldots,N+1$, where we set $\xi_0=\xi_{N+1}=0$. The norm of the tangent vector $\xi$ is defined as
\begin{equation}\label{tvnormdef}
\|\xi\|~\doteq~\sum_{i=1}^N |\xi_i| \cdot  |F(x_i+)-F(x_i-)|
\end{equation}
In other words, the norm of the tangent vector is the sum of the magnitude of each  $\xi_i$ multiplied by the size of the shifted jumps.

In order to show well-posedness of a Cauchy problem at a road junction, one proceeds as follows.
Given piecewise constant initial-boundary conditions on each link incident to that junction, one considers their tangent vectors. By showing that the norms of their tangent vectors
are uniformly bounded in time among approximate wave-front tracking solutions, one guarantees that  the $L^1$ distance of any two solutions is bounded, up to a constant, by the $L^1$ distance of their respective initial conditions. More precisely, we have the following theorem.

\begin{theorem}
If the norm of a tangent vector does not increase in time among all approximate wave-front tracking solutions, then the Cauchy problem on a junction is well-posed and has a unique solution.
\end{theorem}
 \begin{proof}
 See \cite{GPbook}.
 \end{proof}

 \subsection{Well-posedness of the junction models}
A key fact necessary for the success of the above procedure is that when a wave interacts with the junction, the norm of the tangent vector changes in the same way as the jumps in the flows. 
To make our statement precise, let us consider a junction $J$ and assume the interacting wave $(\rho_i,\,\rho_i^-)$  is coming from road $I_i$ and generates a new wave $(\rho_j^-,\,\rho_j^+)$ on some other link $I_j$ after interacting with the junction.  We indicate by  $\Delta q_i$ and $\Delta q_j$ the flow jumps in these waves; that is,
$$
\Delta q_i~=~f_i(\rho_i^-)-f_i(\rho_i),\qquad\qquad \Delta q_j~=~f_j(\rho_j^+)-f_j(\rho_j^-)
$$
 In addition, let us imagine a shift of the wave $(\rho_i,\,\rho_i^-)$ by $\xi_i$. Such a shift will certainly cause a shift of the wave $(\rho_j^-,\,\rho_j^+)$, whose amount is denoted by $\xi_j$.  We have the following lemma which relates all the quantities mentioned above in one identity. This result  is due to \cite{GPbook}.
 \begin{lemma}
If the wave on $I_i$ interacts with $J$ without producing waves in the same road $I_i$, then, the shift $\xi_j$ produced on $I_j$, as a result of the shift $\xi_i$ on $I_i$,  satisfies
\begin{equation}\label{eq:tangent-vectors}
\xi_j\big(\rho_j^+-\rho_j^-\big)~=~{\Delta q_j\over \Delta q_i}\,\xi_i \left(\rho_i^--\rho_i\right)
\end{equation}
where $\rho^-_j,\,\rho^+_j$ are the states at $J$ on road $I_j$ before and after the interaction, respectively.
\end{lemma}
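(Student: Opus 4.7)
The plan is to treat the spatial shift $\xi_i$ as an infinitesimal perturbation of the incoming wave's trajectory in the $(t,x)$-plane, convert it into a temporal shift of the wave's arrival at the junction $J$ via Rankine-Hugoniot, transport that temporal shift across the junction using the fact that the Riemann Solver reacts solely to the switch of boundary datum on $I_i$, and then convert back into a spatial shift $\xi_j$ on road $I_j$ via Rankine-Hugoniot.

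In more detail, let $s_i = \Delta q_i/(\rho_i^+-\rho_i^-)$ be the speed of the interacting wave on $I_i$, let $t^*$ denote the interaction time, and let $s_j = \Delta q_j/(\rho_j^+-\rho_j^-)$ be the speed of the wave produced on $I_j$. The unperturbed discontinuity on $I_i$ traces a straight line in the $(t,x)$-plane, so shifting it horizontally by $\varepsilon\xi_i$ displaces its intersection with the junction boundary backward in time by $\varepsilon\xi_i/s_i$. Consequently, in the perturbed solution the boundary state on $I_i$ switches from $\rho_i^+$ to $\rho_i^-$ at time $t^* - \varepsilon\xi_i/s_i$ rather than at $t^*$. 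Because no reflected wave is produced on $I_i$ by hypothesis, this shifted switch is the only change the Riemann Solver at $J$ sees; since the solver is time-translation invariant, each outgoing wave it emits is triggered with the identical temporal shift $-\varepsilon\xi_i/s_i$. The wave on $I_j$ then travels at speed $s_j$ from the junction starting at the shifted emission time, which at any fixed later time displaces its position by the spatial shift $\xi_j = s_j\xi_i/s_i$. Multiplying by $\rho_j^+-\rho_j^-$ and substituting the Rankine-Hugoniot expressions for $s_i$ and $s_j$ gives $\xi_j(\rho_j^+-\rho_j^-) = (\Delta q_j/\Delta q_i)\,\xi_i(\rho_i^+-\rho_i^-)$, which is exactly (\ref{eq:tangent-vectors}).

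The main obstacle is justifying the one-to-one transfer of the temporal shift across the junction, and this is precisely where the hypothesis ``without producing waves in the same road $I_i$'' enters. If the solver responded to the incoming wave by also emitting a reflected wave back onto $I_i$, then the effective pre- and post-interaction states on $I_i$ at the junction would no longer coincide with $\rho_i^\pm$, and the clean correspondence between the arrival-time shift on $I_i$ and the emission-time shift on $I_j$ would be spoiled by extra contributions from the reflected wave; one would then have to account for a splitting of $\xi_i$ among several emitted waves. Under the hypothesis, however, the boundary state on $I_i$ transitions cleanly from $\rho_i^+$ to $\rho_i^-$ at the shifted time, the Riemann Solver responds with a single clean transition on each outgoing road at the same shifted time, and the Rankine-Hugoniot bookkeeping outlined above closes without further input.
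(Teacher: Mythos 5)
Your proposal is correct and follows essentially the same route as the paper's proof: convert the spatial shift $\xi_i$ into a temporal shift of the interaction time via the Rankine--Hugoniot speed $\Delta q_i/(\rho_i^+-\rho_i^-)$, observe that the temporal shift must be identical for the wave emitted on $I_j$, and convert back into a spatial shift using the speed $\Delta q_j/(\rho_j^+-\rho_j^-)$. Your added discussion of why the no-reflected-wave hypothesis is needed is a sound elaboration of a point the paper leaves implicit, but it does not change the argument.
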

\begin{proof}
The proof is given by \cite{GPbook}
\end{proof}

Notice that the quantity $\left|\xi_i\left( \rho_i^- - \rho_i\right)\right|$ is precisely the $L^1$-distance of two initial conditions on $I_i$ with and without the shift $\xi_i$ at the discontinuity $(\rho_i,\,\rho_i^-)$, respectively. Similarly, $\left|\xi_j\left( \rho_j^+ - \rho_j^-\right)\right|$ is the $L^1$-distance of the two conditions on $I_j$ as a result of having or not having the initial shift $\xi_i$,  respectively. To make things even more explicit, if $\xi_i$ is the only shift in the initial condition on $I_i$, then $\left|\xi_i\left( \rho_i^- - \rho_i\right)\right|$ is the norm of the tangent vector for $I_i$ (see definition \eqref{tvnormdef}) before the interaction, and $|\xi_j( \rho_j^+ - \rho_j^-)|$ is the norm of the tangent for $I_j$ after the interaction

From \eqref{eq:tangent-vectors}, we have
\begin{equation}\label{tangent-vectors}
\left|\xi_j \big(\rho_j^+-\rho_j^-\big)\right|~=~{|\Delta q_j|\over|\Delta q_i|}  \left|\xi_i \left(\rho_i^- -\rho_i \right)\right|
\end{equation}
Therefore, to bound the norm of the tangent vectors one has to check that the multiplication
 factors $\frac{|\Delta q_j|}{|\Delta q_i|}$ remains uniformly bounded, regardless of  the number of interactions that may occur at this junction.

We are ready to prove the following:
\begin{theorem}
For both the merge and diverge junctions, the norms of the tangent vectors shown as
\eqref{tangent-vectors} ares uniformly bounded.
\end{theorem}

\begin{proof}

\noindent {\bf (Part 1.)} Let us start with the diverge junction.
Consider a wave interacting from road $I_i$ producing a wave on road $I_j$,
where $i,j=1,2,3$. By (\ref{fifodiv}), the change in the boundary flows after the interaction, denoted by $\Delta q_{1},\,\Delta q_{2}$ and $\Delta q_{3}$ satisfy 
$$
\Delta q_{2}~=~\alpha_{1,2} \,\Delta q_{1},\qquad \Delta q_{3}~=~\alpha_{1,3}\,\Delta q_{1}
$$
Consequently, the multiplication factors (see \eqref{tangent-vectors}) satisfy 
$$
\begin{array}{lll}
\displaystyle {|\Delta q_1|\over |\Delta q_1|}~=~1, \qquad  & \displaystyle {|\Delta q_2|\over |\Delta q_1|}~=~\alpha_{1,2},  \qquad &\displaystyle {|\Delta q_3|\over |\Delta q_1|}~=~ \alpha_{1,3}; \\\\
\displaystyle {|\Delta q_1| \over |\Delta q_2|}~=~{1\over \alpha_{1,2}},   \qquad & \displaystyle {|\Delta q_2|\over  |\Delta q_2|}~=~1, \qquad & \displaystyle {|\Delta q_3| \over |\Delta q_2|}~=~{\alpha_{1,3}\over \alpha_{1,2}}; \\\\
\displaystyle {|\Delta q_1| \over |\Delta q_3|}~=~{1\over \alpha_{1,3}}, \qquad   & \displaystyle {|\Delta q_2|\over |\Delta q_3|}~=~{\alpha_{1,2}\over\alpha_{1,3}}, \qquad  &\displaystyle {|\Delta q_3|\over |\Delta q_3|}~=~1.
\end{array}
$$
That is, the multiplication factors of the tangent vector norms, denoted by $q_{ij}$,  
are given by the following table:
\begin{equation}
\begin{array}{ccc}
1 &  \alpha_{1,2} & \alpha_{1,3} \\
\frac{1}{\alpha_{1,2}} & 1 & \frac{\alpha_{1,3}}{\alpha_{1,2}}\\
\frac{1}{\alpha_{1,3}} &  \frac{\alpha_{1,2}}{\alpha_{1,3}} & 1
\end{array}
\end{equation}
Notice that
\[
q_{ij}q_{jk}=q_{ik},\qquad \forall  i,\,j,\,k~=~1,\,2,\,3
\]
this means that no matter how many interactions occur, the multiplication factor, which is a product of individual $q_{ij}$'s, is always bounded uniformly. Therefore, the norms of the tangent vectors are uniformly bounded, independent of the number of interactions at the junction.\\

\noindent {\bf (Part 2.)} Let us now turn to the merge junction. Similarly, we need to check the 
multiplication factors $q_{ij}$ of the norms of the tangent vectors. Assume first that a wave interacts from road $I_6$, then in the case depicted in the left part  of Figure
\ref{figmerge} we have $q_{6,4}=p$, $q_{6,5}=1-p$ and $q_{6,6}=1$.
Otherwise, if the right part of Figure \ref{figmerge} occurs, then either
$q_{6,4}=1$, $q_{6,5}=0$ and $q_{6,6}=1$, or
$q_{6,4}=0$, $q_{6,5}=1$ and $q_{6,6}=1$. The multiplication factors are certainly uniformly bounded. \\
Assume now that a wave interacts from road $I_4$, then we distinguish between two cases:
\begin{itemize}
\item[(i)] $D_4(\rho_4^-)+D_5(\rho_5^-)<S_6(\rho_6^-)$;
\item[(ii)] $D_4(\rho_4^-)+D_5(\rho_5^-)\geq S_6(\rho_6^-)$.
\end{itemize}
If case (i) occurs, then $q_{4,4}=1$, $q_{4,5}=0$ and $q_{4,6}=1$.
In case (ii) we have $q_{4,4}=1$, $q_{4,5}=\frac{1-p}{p}$ and $q_{4,6}=0$. \\
The case where a wave from road $I_5$ interacts with the junction is entirely similar and we get the following.
If case (i) occurs, then $q_{5,4}=0$, $q_{5,5}=1$ and $q_{5,6}=1$.
If case (ii) occurs, then $q_{5,5}=1$,
$q_{5,4}=\frac{p}{1-p}$ and $q_{5,6}=0$. Since $q_{ij}q_{jk}\leq q_{ik}$ for $i,\,j\,k=4,\,5,\,6$, we reach the same conclusion about uniform boundedness for the diverge junction. 
\end{proof}

\section{Numerical examples}\label{secNumerical}

\subsection{Visualization of spillback}
In this section, we provide an illustrate of shock waves and vehicle spillback on a traffic network, based on the computational procedure described in Section \ref{secdiscreteDAE}. The simple seven-link network shown in Figure \ref{figseven} is considered where triangular fundamental diagrams are employed for all the links, and link-specific parameters are summarized in Table \ref{tablinkpara}. For the merge and diverge junction models, we apply the Riemann Solvers discussed in Section \ref{sectwors}, where the vehicle turning ratios and the right-of-way parameters are set to be 1/2.

\begin{figure}[h!]
\centering
\includegraphics[width=0.45\textwidth]{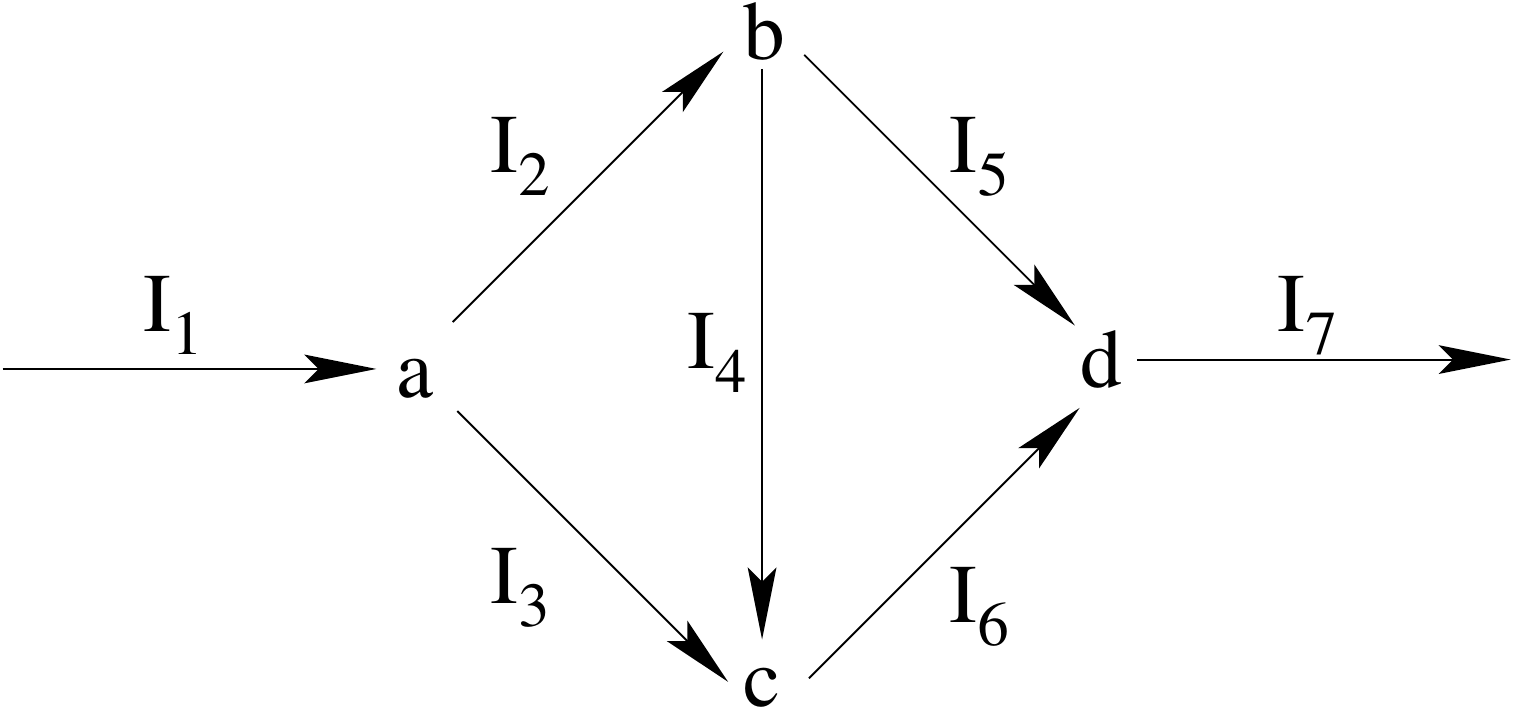}
\caption{A network example consisting of seven links and four nodes}
\label{figseven}
\end{figure}

\begin{table}[h!]
\begin{center}
\begin{tabular}{|c|c|c|c|c|c|}
\hline
Arc  & $\rho_i^{jam}$    &  $v_i$     &   $w_i$  &  $C_i$  & $L_i$ \\
     &   (vehicle/mile)     &  (mile/hour)   & (mile/hour)&  (vehicle/hour)  &  (mile)   \\
\hline\hline
$I_1$   &  400 &  30 &  10  &  3000 & 3 \\
\hline
$I_2$ &  200   &  30 & 10   &  1500  &  3\\
\hline 
$I_3$ &   400  &  30  & 10  &  3000  &  3 \\
\hline
$I_4$ &  100   & 30 &  10    &   750   &  3\\
\hline
$I_5$  &  200  &  30 & 10    & 1500   & 3\\
\hline 
$I_6$  & 200   &  30  & 10    &  1500   &  3 \\
\hline
$I_7$ &  200   & 30  & 10     &  1500   &  3 \\
\hline
\end{tabular}
\caption{\small Link parameters for the small network. $\rho_i^{jam}$: jam density; $v_i$: forward wave speed; $w_i$: backward wave speed; $C_i$: flow capacity; $L_i$: length.}
\label{tablinkpara}
\end{center}
\end{table}

We consider a time horizon of $[0,\,5]$ (in hour), with a time step $\delta t=0.05$ hour (3 minutes).  The time-dependent departing flow into the network is randomly generated between $0$ and the flow capacity $C_1$. The departure window is set to be $[0.5,\,3.5]$ (in hour).

As indicated by Table \ref{tablinkpara}, link $I_4$ presents a potential bottleneck at node $b$ as it has a very low flow capacity. In addition, a potential bottleneck also exists at the  merge node $c$  since the capacity of the downstream link $I_6$ is less than the sum of flow capacities of $I_3$ and $I_4$.  In order to have a clear visualization of the propagation of kinematic waves and the separating shock wave on each link, we use the boundary data $ q_{out,i},\, q_{in, i},\,i=1,\ldots,  7$  to construct the Moskowtiz functions through the Lax-Hopf formula \eqref{simpleLH}. For the Moskowitz surface, the separating shock is no longer represented as a discontinuity in the function; rather, it is shown as a `kink'  (discontinuity in the first derivative). The Moskowitz functions on links $I_1, I_2, I_3$ and $I_4$ are shown in Figures \ref{figlink1}, \ref{figlink2}, \ref{figlink3} and \ref{figlink4}, respectively.

Two regions are observed from the 3-D plots of the Moskowitz functions, which represent respectively the free-flow phase and the congested phase of traffic. The mutual boundary of these two regions is identified as the separating shock. One can observe from Figures \ref{figlink3} and \ref{figlink4} that the congested regions on links $I_3$ and $I_4$ initiate from their downstream boundaries (node $c$) and propagates backward towards their entrances and then further to their respective upstream links $I_1$ and $I_2$.  Such spillback phenomena are visualized in Figures \ref{figspillbackvis}, where we place relevant Moskowitz functions side by side and view them from above. It is clearly seen that the congested regions `penetrate' the boundaries between the two links, which is a clear indication of the propagation of congestion among different links of the network. We also see that the separating shocks begin to retreat around time $t=3.5$, when the departing flow becomes zero, indicating that the queues are dissipated as a result of substantially reduced inflow into the network.

\begin{figure}[h!]
\begin{minipage}[b]{.49\textwidth}
\centering
\includegraphics[width=\textwidth]{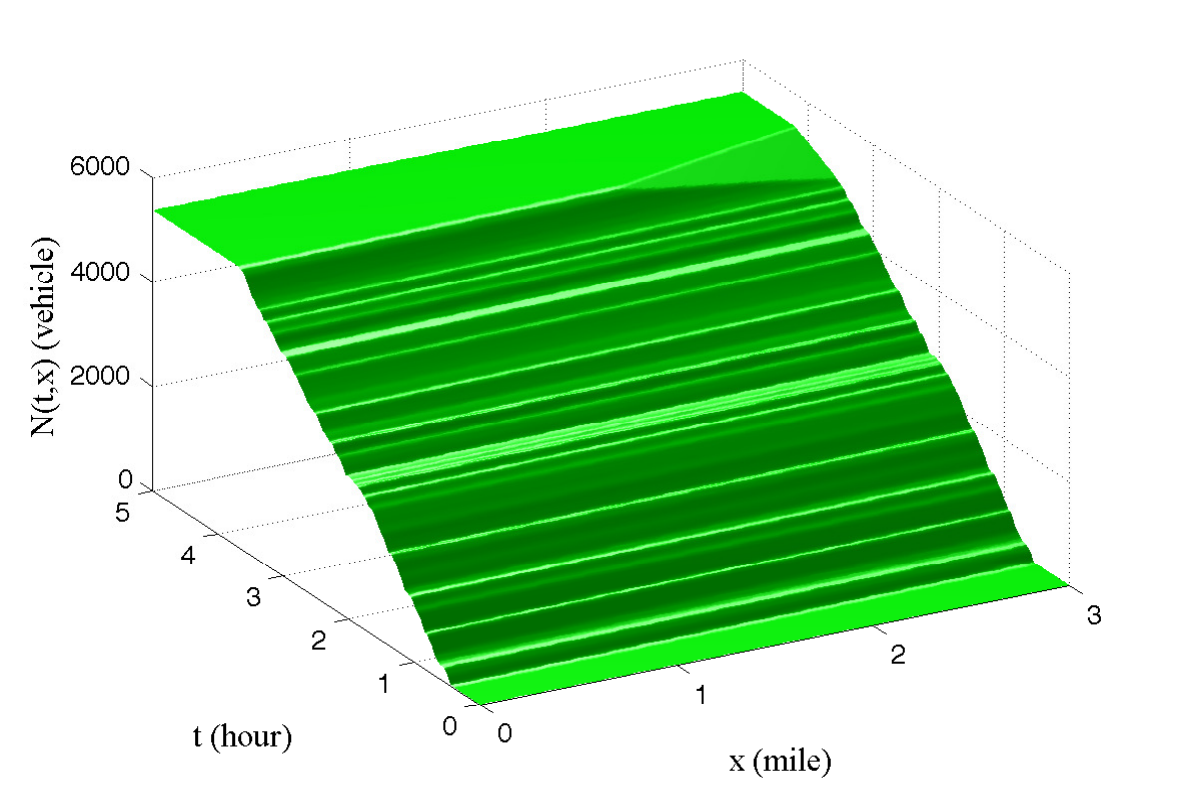}
\caption{Moskowitz function for link $I_1$.}
\label{figlink1}
\end{minipage}
\begin{minipage}[b]{.49\textwidth}
\centering
\includegraphics[width=\textwidth]{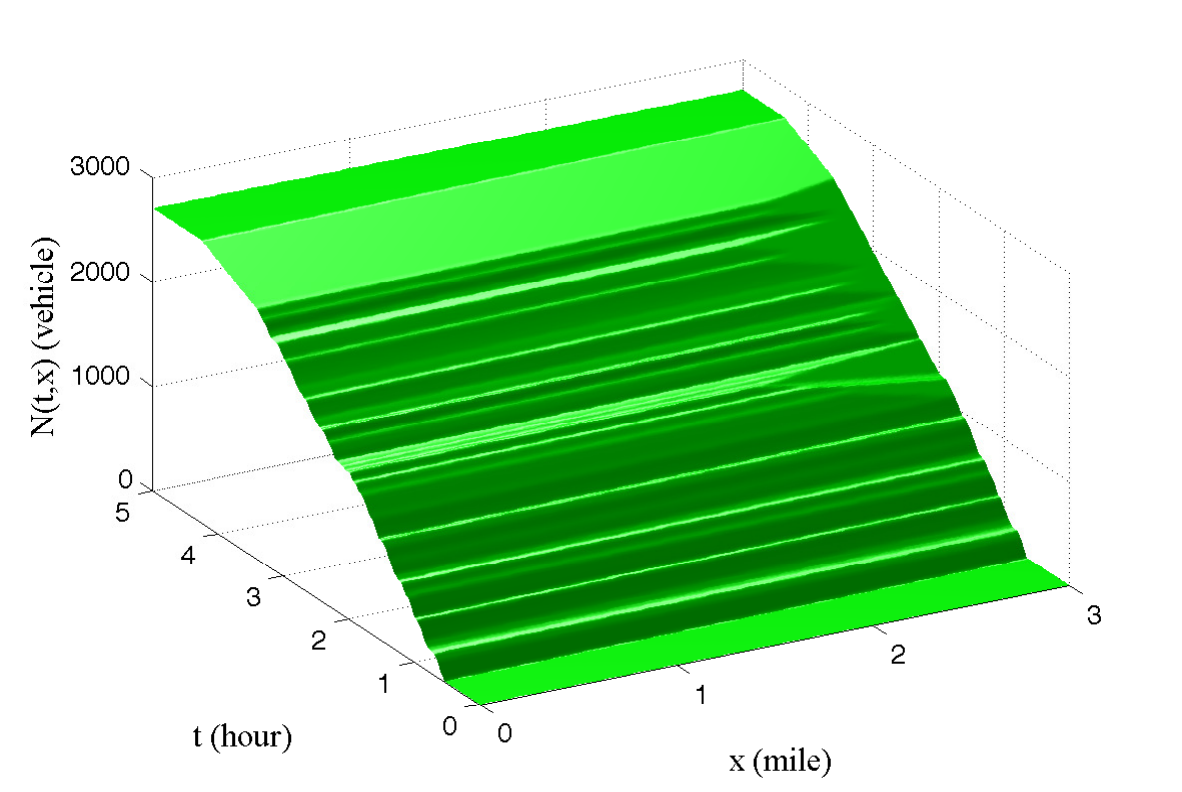}
\caption{Moskowitz function for link $I_2$.}
\label{figlink2}
\end{minipage}
\end{figure}

\begin{figure}[h!]
\begin{minipage}[b]{.49\textwidth}
\centering
\includegraphics[width=\textwidth]{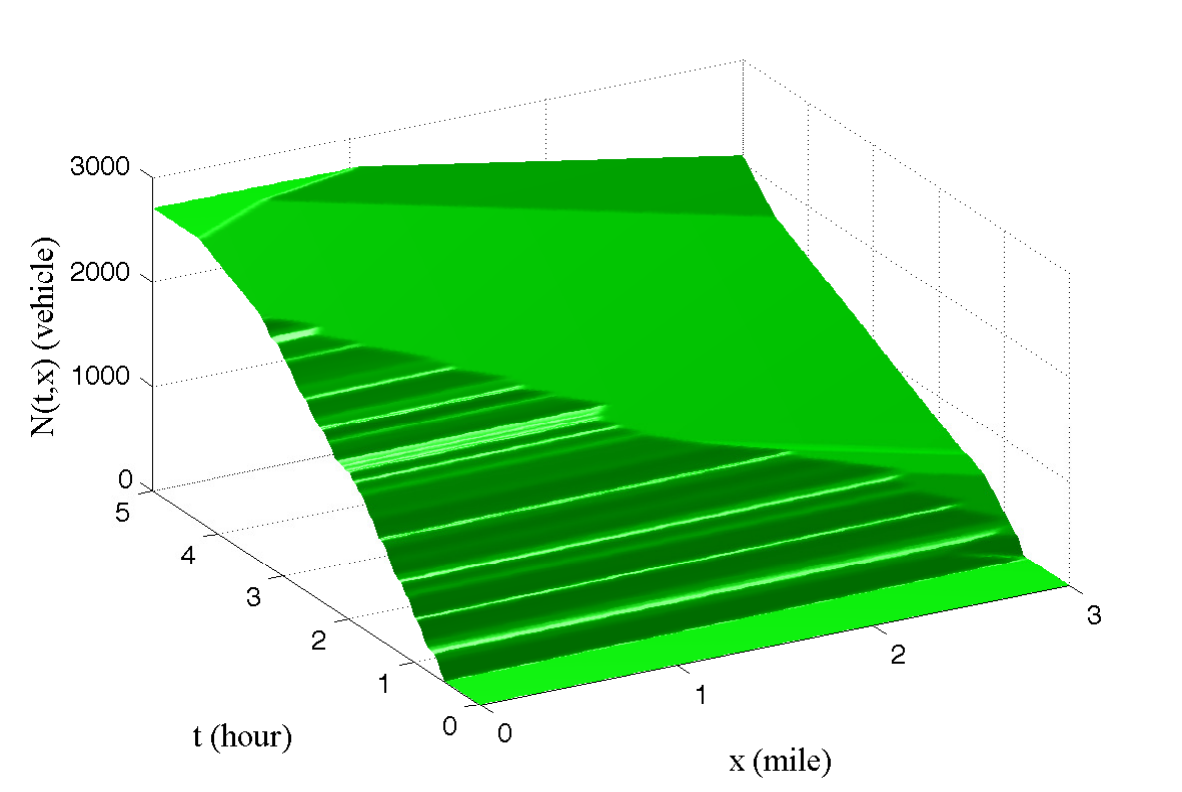}
\caption{Moskowitz function for link $I_3$.}
\label{figlink3}
\end{minipage}
\begin{minipage}[b]{.49\textwidth}
\centering
\includegraphics[width=\textwidth]{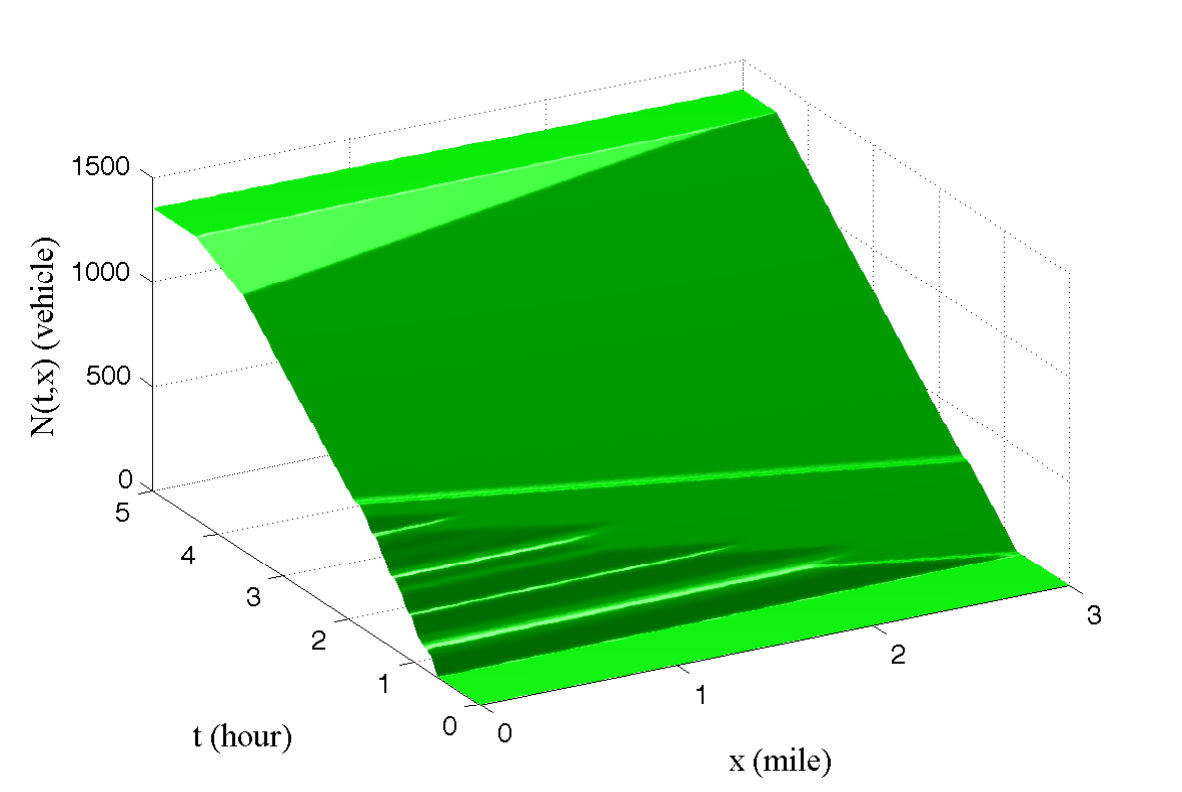}
\caption{Moskowitz function for link $I_4$.}
\label{figlink4}
\end{minipage}
\end{figure}

\begin{figure}[h!]
\begin{minipage}[b]{.49\textwidth}
\centering
\includegraphics[width=\textwidth]{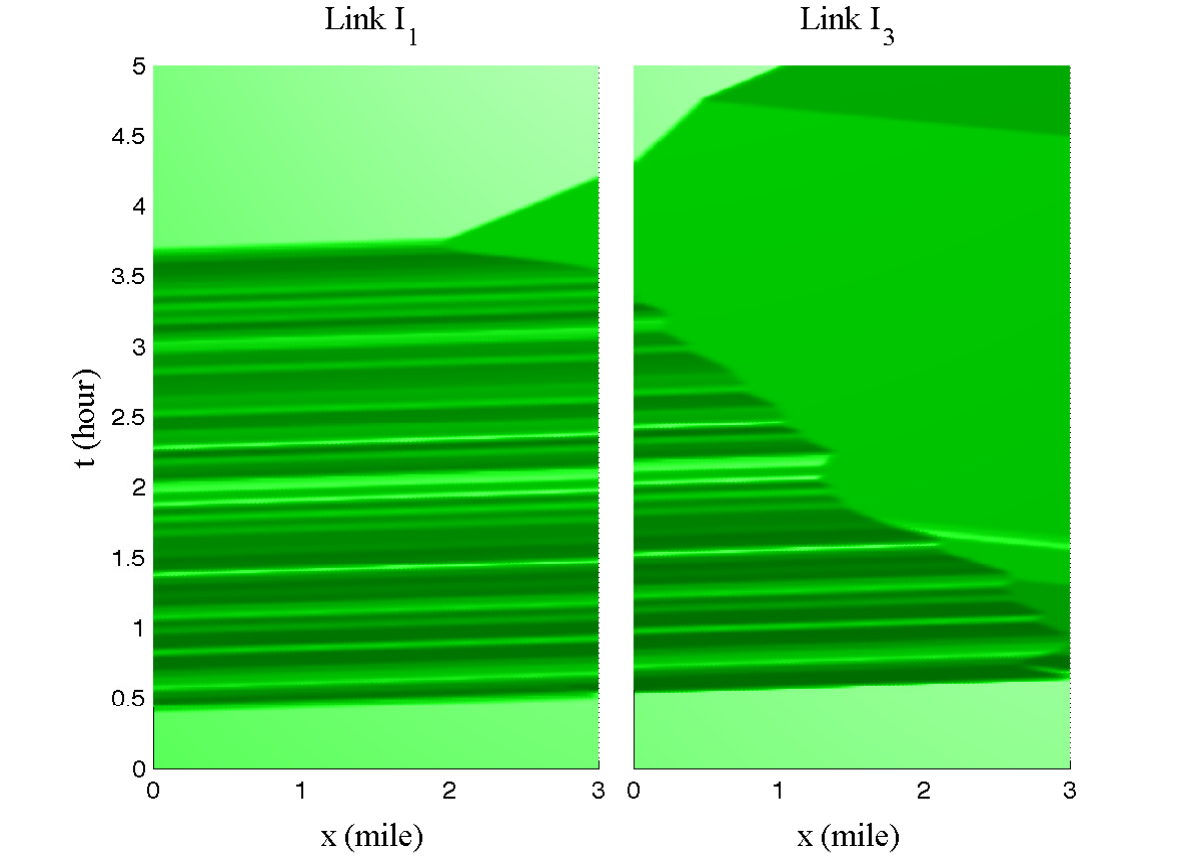}
\end{minipage}
\begin{minipage}[b]{.49\textwidth}
\centering
\includegraphics[width=\textwidth]{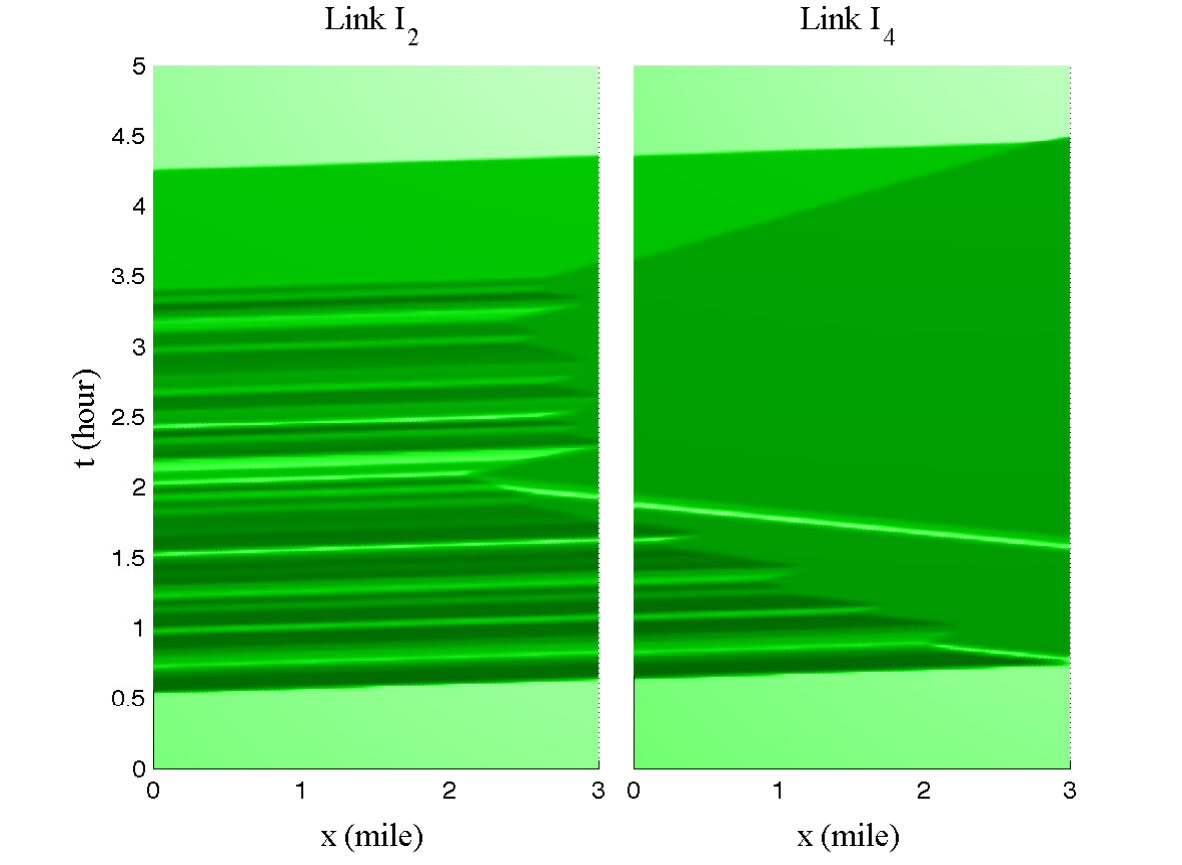}
\end{minipage}
\caption{Moskowitz functions of links $I_1$ and $I_3$ (left), and $I_2$ and $I_4$ (right). The congestion on link $I_3$ ($I_4$) spills over into link $I_1$ ($I_2$).}
\label{figspillbackvis}
\end{figure}

\subsection{Computational time}

This section assesses the computational performance of the discretized DAE system on the seven-link network as well as a few larger traffic networks, which are depicted in Figure \ref{fignetworks}. The network data are provided in http://www.bgu.ac.il/~bargera/tntp/. All computations reported in this section were coded in MATLAB and performed on a standard laptop with 8GB RAM.

\begin{figure}[h!]
\centering
\includegraphics[width=\textwidth]{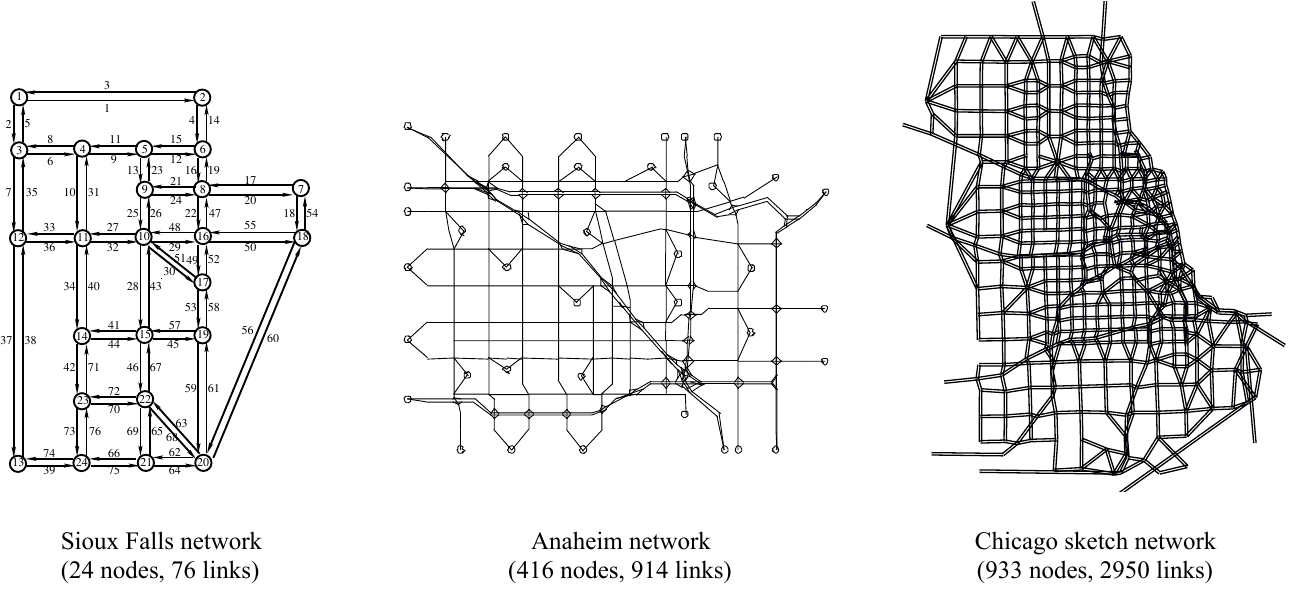}
\caption{Test networks.}
\label{fignetworks}
\end{figure}

For each network we solve the discretized DAE system with a given number of time intervals, denoted $N$. Since these networks contain junctions of more complex topologies than what is described in Section \ref{sectwors}, we consider a junction model proposed by \cite{HGPFY}, which aims to approximate the on-and-off signal timing that exists at a road intersection. An important parameter of this continuum signal junction model is $\eta_i$, which represents the green time ratio within a full signal cycle that is allocated to each incoming link $I_i$ of the intersection. And, such $\eta_i$'s typically need to satisfy $\sum_{I_i\in\mathcal{I}^J} \eta_i\leq1$ to ensure that no conflicting traffic streams are discharged during a signal phase. We refer the reader to \cite{HGPFY} for more discussion on this model and its implications to the modeling of signalized traffic networks.

Each test network is simulated with random inflows at origin nodes. The vehicle turning percentages and green ratio $\eta_i$ are fixed constants in our computations. To sufficiently test the computational time, we consider four values of $N$, the number of time intervals: 100, 200, 400, and 800. Table \ref{tabcputime} presents the computational time for each scenario. We can see from the table (and also the left picture of Figure \ref{figCPUtime}) that the computational times for each test network grow linearly with respect to the number of time steps $N$; that is, the computational time is on the order of $O(N)$. This property of link-based models is in contrast with cell-based models, in which the memory usage and number of flops (floating point operations) are both on the order of $O(N^2)$, due to the spatial discretization and the Courant-Friedrichs-Lewy condition \citep{LeVeque}. This highlights the computational efficiency of the proposed DAE system and link-based approaches in general.

\begin{table}[h!]
\begin{center}
\begin{tabular}{|c|c|c|c|c|}
\hline
               & Seven-arc    & Sioux Falls   &  Anaheim     &   Chicago sketch \\
\hline\hline
$N=100$   &  0.08 s &  0.16 s &  1.30 s  &  1.83 s \\
\hline
$N=200$ &   0.19 s  &   0.30 s &  2.65 s   & 3.52 s\\
\hline 
$N=400$ &   0.41 s  &    0.59 s  & 5.13 s   & 7.16 s\\
\hline
$N=800$ &   0.83 s   &   1.39 s &   10.39 s   & 13.98 s\\
\hline
\end{tabular}
\caption{Summary of computational times.}
\label{tabcputime}
\end{center}
\end{table}

\begin{figure}[h!]
\begin{minipage}[b]{.49\textwidth}
\centering
\includegraphics[width=\textwidth]{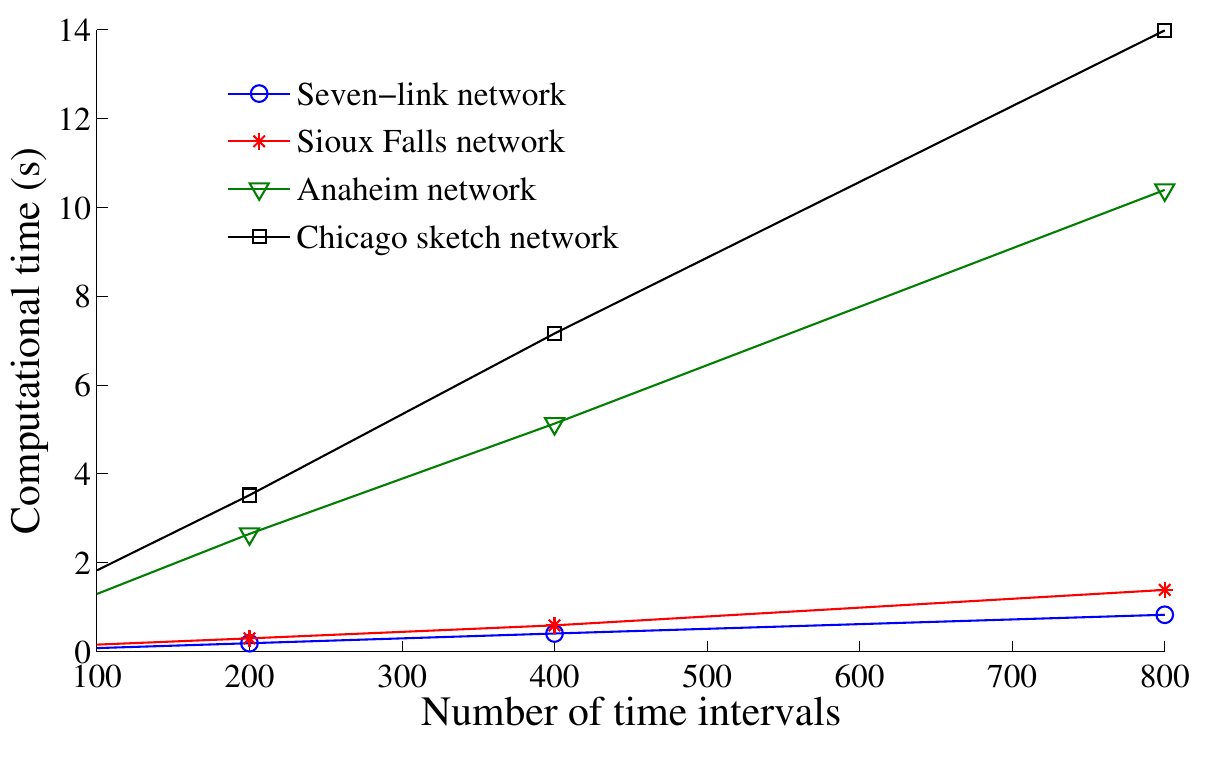}
\end{minipage}
\begin{minipage}[b]{.49\textwidth}
\centering
\includegraphics[width=\textwidth]{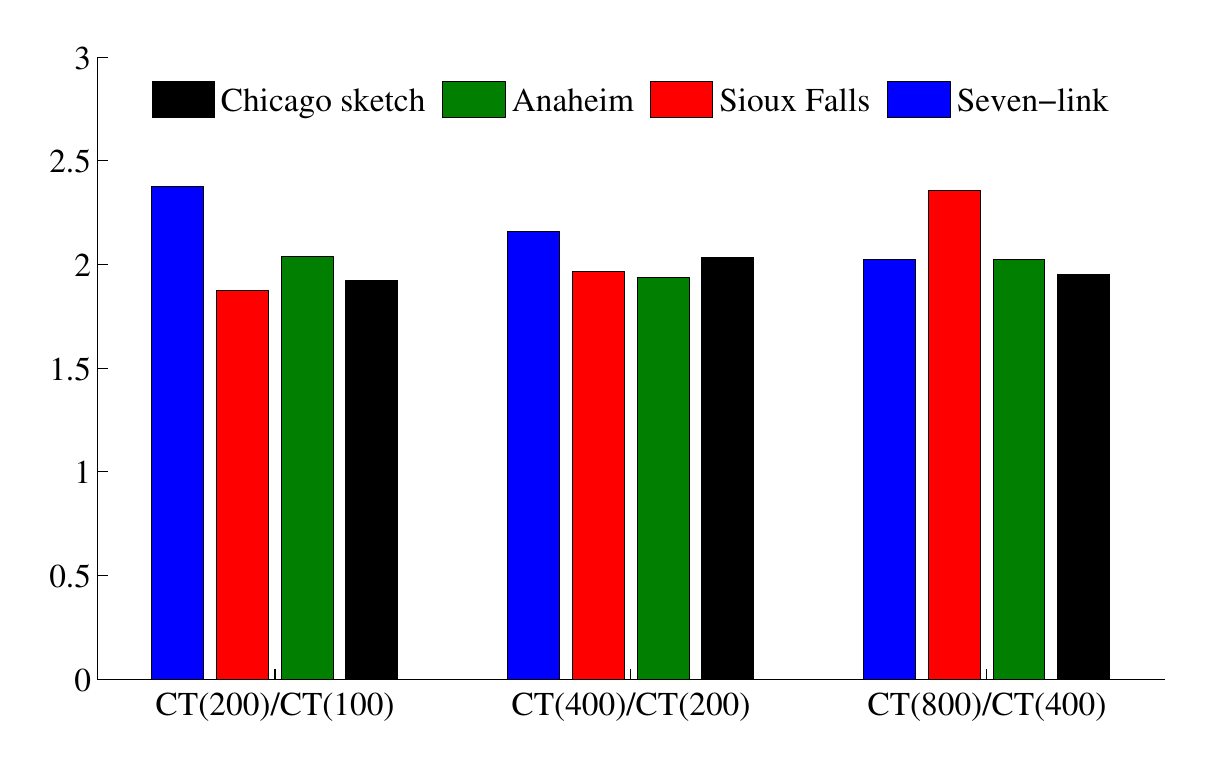}
\end{minipage}
\caption{Results of the computations results on the four test networks.}
\label{figCPUtime}
\end{figure}

The linear growth in the computational times are further visualized in Figure \ref{figCPUtime} on the left. On the right hand side of Figure \ref{figCPUtime}, we show the ratios $CT(200)/CT(100)$, $CT(400)/CT(200)$ and $CT(800)/CT(400)$, where $CT(N)$ denotes the computational time with $N$ time steps. It is  expected theoretically that as $N$ doubles, so should the computational time. This is confirmed in Figure \ref{figCPUtime}, where all the ratios are around 2.

\section{Concluding remarks}\label{secconclusion}

This paper presents a continuous-time version of the network-based LWR model. We show that the variational theory, when applied to  a triangular fundamental diagram, leads to a DAE representation of the network dynamics. This DAE system is link-based as it is concerned  with only variables associated with links in the network. The DAE system supports efficient computation as we have demonstrated using the numerical examples on several small and large networks.  We also show that the DAE system, when discretized by a proper forward scheme, leads to the link transmission model.

The existence and well-posedness of the network solution are also investigated in connection with the two specific Riemann Solvers presented in this paper. In deriving the existence result, we employ the general framework put forward by \cite{GP2009} and illustrate how the sufficient conditions for solution existence can be checked against a particular Riemann Solver. The well-posedness of solutions is shown using the notion of generalized tangent vector.

In general, solution existence and well-posedness are not guaranteed for arbitrary Riemann Solvers and/or junction models. An important aspect of future research is to establish these qualitative results for more junction types and Riemann Solvers. In addition, the well-posedness result presented in this paper relies on the important assumption that the vehicle turning ratios are fixed constants, which is not longer true if one considers a network with given origin-destination pairs and paths choices of drivers. In the latter case, which is often referred to as the dynamics network loading (DNL) model in the literature of dynamic traffic assignment, the multiplication factors for the norms of tangent vectors are no longer bounded. As a result, the well-posedness may fail. Such a counter example, as well as sufficient conditions that guarantee the well-posedness of solutions of the DNL model, are investigated in another paper \citep{LWRcont}.

\section{Acknowledgements}

This work is jointly supported by the National Natural Science Foundation of China (71271183) and a grant from the Research Grants Council of the Hong Kong Special Administrative Region, China (HKU 17207214E). The authors are grateful to the three reviewers for their constructive comments.

 \appendix

\section{Proof of technical results}

\subsection{Proof of Lemma \ref{existencediverge}}\label{secapplemmaexistencediverge}

\begin{proof}
First, we notice that the Riemann Solvers  for both the diverge and merge junctions are expressed in terms of link demand and supply, and some exogenous parameters such as turning ratio and link capacity. Furthermore, by \eqref{demanddef}-\eqref{supplydef}, the demand and supply both depend on bad datum only. Thus Property (P1) holds for both junction models.

We now show property (P2) holds for this diverge junction. We first focus on an interacting wave from link $I_1$.  It can be seen from (\ref{fifodiv}) that the change in the flow through the junction is proportional to the flow jump of this interacting wave. More precisely, indicating by $\Delta$ the change due to a wave interaction, we can write, for a wave interacting from road $I_1$, that
\[
\Delta q_{in,2}~=~\alpha_{1,2} \Delta q_{out,1}, \qquad
\Delta q_{in,3}~=~\alpha_{1,3}\Delta q_{out,1}
\]
where $\Delta q_{out,1} = f_1(\rho^+_1)-f_1(\rho^-_1)$. Moreover, the flow through the junction is given by:
\begin{equation}\label{gammaest}
\left|\Gamma(t+)-\Gamma(t-)\right|~=~|\Delta q_{out,1}|
\end{equation}
The change in the flow total variation, according to \eqref{TVdef}, is estimated as:
\begin{align}
|\Delta TV(f)| ~=~&\Big|  |\Delta q_{in, 2}|+|\Delta q_{in, 3}|+|f_1(\rho_1)-f_1(\rho_1^+)|-|f_1(\rho_1)-f_1(\rho_1^-)| \Big|  \nonumber
\\
~\leq~&  |\Delta q_{in, 2}|+|\Delta q_{in, 3}|+ \left|  f_1(\rho_1^+)-f_1(\rho_1^-) \right|  \nonumber
\\
~=~& \alpha_{1,2} |\Delta q_{out, 1}|+ \alpha_{1,3}|\Delta q_{out,1}|+|\Delta q_{out,1}| \nonumber
\\
\label{deltaTV}
~=~& (1+\alpha_{1,2}+\alpha_{1,3})|\Delta q_{out,1}|
\end{align}

\noindent Next, we claim that 
\begin{equation}\label{claimeqn}
|\Delta q_{out,1}|~=~|f_1(\rho_1^+)-f_1(\rho_1^-)|\leq|f_1(\rho_1)-f_1(\rho_1^-)|
\end{equation}
 Indeed, recall from \eqref{fifodiv} that 
\begin{equation}\label{fifodiv1}
q_{out,1}~=~\min\left\{D_1,\,{S_2\over \alpha_{1,2}},\,{S_3\over \alpha_{1,3}}\right\}
\end{equation}
Furthermore, recall the demand and supply viewed as functions of the Riemann data:
$$
D_i(\rho_i)~=~\begin{cases}
f_i(\rho_i) \quad & \rho_i\in[0,\,\sigma_i]
\\
C_i\quad  &\rho_i\in(\sigma_i,\,\rho_i^{jam}]
\end{cases}
\qquad
S_j(\rho_j)~=~\begin{cases}
C_j\quad & \rho_j\in[0,\,\sigma_j]
\\
f_j(\rho_j)\quad  &\rho_i\in(\sigma_j,\,\rho_j^{jam}]
\end{cases}
$$
where $\sigma_i$, $\rho_i^{jam}$, and $C_i$ denote the critical density, the jam density, and the flow capacity of link $I_i$, respectively. We consider several cases. \\

\noindent 1.  If $D_1(\rho_1)\leq \min\left\{{S_2\over \alpha_{1,2}},\,{S_3\over \alpha_{1,3}}\right\}$. Then it holds that $\rho_1^+=\rho_1$. Thus
$$
|\Delta q_{out,1}|~=~|f_1(\rho_1^+)-f_1(\rho_1^-)|~=~|f_1(\rho_1)- f_1(\rho_1^-)|
$$

\noindent 2. If $D_1(\rho_1) > \min\left\{{S_2\over \alpha_{1,2}},\,{S_3\over \alpha_{1,3}}\right\}\geq D_1(\rho_1^-)$. Then after the interaction, the exit flow satisfies $q_{out,1}=f_1(\rho_1^+)<D_1(\rho_1)= f_1(\rho_1)$. Then 
$$
0~\leq~\Delta q_{out,1}~=~f_1(\rho_1^+)-f_1(\rho_1^-)~<~f_1(\rho_1)-f_1(\rho_1^-)
$$

\noindent 3. If both $D_1(\rho_1)$ and  $D_1(\rho_1^-)$ are greater than $\min\left\{{S_2\over \alpha_{1,2}},\,{S_3\over \alpha_{1,3}}\right\}$. Then we have that 
$$
f_1(\rho_1^-)~=~f_1(\rho_1^+)=\min\left\{{S_2\over \alpha_{1,2}},\,{S_3\over \alpha_{1,3}}\right\}~\Longrightarrow ~|\Delta q_{out,1}|~=~0~\leq~|f_1(\rho_1)- f_1(\rho_1^-)|
$$
In any case, the claimed inequality holds. Thus, the following holds according to \eqref{gammaest}, \eqref{deltaTV} and \eqref{claimeqn}.
$$
|\Delta TV(f)|\leq(1+\alpha_{1,2}+\alpha_{1,3})|\Delta q_{out,1}|=(1+\alpha_{1,2}+\alpha_{1,3})\min\left\{\left|\Gamma(t+)-\Gamma(t-)\right|,\, |f_1(\rho_1)-f_1(\rho_1^-)|\right\}
$$
This verifies (P2) for an interacting wave from $I_1$.

We now consider an interacting wave from $I_2$, and begin with the following estimations:
\begin{equation}\label{lemmadiveqn1}
\Delta q_{out,1}~=~\frac{1}{\alpha_{1,2}}\Delta q_{in,2},\qquad
\Delta q_{in,3}~=~\frac{\alpha_{1,3}}{\alpha_{1,2}}\Delta q_{in,2}
\end{equation}
Similar to \eqref{deltaTV}, we have the following inequality:
\begin{equation}\label{lemmadiveqn2}
|\Delta TV(f)|~\leq~\left(1+{1\over \alpha_{1,2}}+{\alpha_{1,3}\over\alpha_{1,2}}\right)|\Delta q_{in,2}|
\end{equation}
In addition, one can derive the inequality 
\begin{equation}\label{claimeqn2}
|\Delta q_{in,2}|\leq |f_2(\rho_2)-f_2(\rho_2^-)|
\end{equation}
in the same way as before, the proof of which is straightforward but tedious and is omit from this paper. Finally, according to \eqref{lemmadiveqn1}, \eqref{lemmadiveqn2} and \eqref{claimeqn2},
\begin{align*}
|\Delta TV(f)|~\leq~&\left(1+{1\over \alpha_{1,2}}+{\alpha_{1,3}\over\alpha_{1,2}}\right)|\Delta q_{in,2}|
\\
~=~&\left(1+{1\over \alpha_{1,2}}+{\alpha_{1,3}\over\alpha_{1,2}}\right)\min\left\{|\Delta q_{in,2}|,\,|f_2(\rho_2)-f_2(\rho_2^-)|\right\}
\\
~\leq~&\left(1+{1\over \alpha_{1,2}}+{\alpha_{1,3}\over\alpha_{1,2}}\right) \min\left\{ |\Gamma(t+)-\Gamma(t-)|,\, |f_2(\rho_2)-f_2(\rho_2^-)|\right\}
\end{align*}
This establishes (P2) for link $I_2$. The case with link $I_3$ is similar.

For (P3), we notice from \eqref{fifodiv} and the definitions of demand and supply that a wave bringing a decrease in flow also decreases the demand (for incoming links) or the supply (for outgoing links), and thereby decreases the flow through the junction.  Therefore,  (P3) is granted. 

Finally, with all three properties verified, Theorem 5.1 of \cite{GP2009} asserts that a weak solution exists at the diverge junction. 
\end{proof}

\subsection{Proof of Lemma \ref{existencemerge}}\label{secappexistencemerge}

\begin{proof} 
\noindent Property (P1) was already discussed in the proof of Lemma \ref{existencediverge}. The rest of the proof is divided into several parts.\\

\noindent{\bf (Part 1.)} Let us verify property (P2). First, assume there is a wave interacting from road $I_4$ (with the case of $I_5$ being entirely similar). Using the same notations as in the previous lemma, we indicate by $(\rho_4,\,\rho_4^-)$ the interacting wave and denote by $D_4(\rho_4^-)$ and $f_4(\rho^-_4)$ the demand and exit flow of $I_4$, respectively, before the interaction. In addition, let $D_4(\rho_4)$ the demand at the time of interaction.
We now distinguish between two cases:
\begin{itemize}
\item[(a)] $ D_4(\rho^-_4)>f_4(\rho_4^-)$ (corresponding to the left part of Figure \ref{figmerge}).
\item[(b)] $ D_4(\rho^-_4)=f_4(\rho_4^-)$ (corresponding to the right part of Figure \ref{figmerge}).
\end{itemize}

\noindent{\bf (Part 1.a.)} Case a) implies that $\rho_4^-$ is a good datum; that is, $\rho_4^->\sigma_4$.  Thus $\rho_4$ must be a bad datum because of the positive wave speed and the Rankine-Hugoniot condition. In addition, we have that $D_4(\rho_4)=f_4(\rho_4)<f_4(\rho_4^-)< D_4(\rho_4^-)$. According to the Riemann Solver illustrated in the left part of Figure \ref{figmerge}, we must have $f_4(\rho_4^+)=D_4(\rho_4)$ and $\rho_4^+=\rho_4$ after the interaction. Accordingly, the change in the total variation of the flow 
\begin{align*}
\Delta TV(f)~=~&  |f_5(\rho_5^+) -f_5(\rho_5^-)|+|f_6(\rho_6^+)-f_6(\rho_6^-)| +|f_4(\rho_4)-f_4(\rho_4^+)|-|f_4(\rho_4)-f_4(\rho_4^-)| 
\\
~=~&  |f_5(\rho_5^+) -f_5(\rho_5^-)|+|f_6(\rho_6^+)-f_6(\rho_6^-)| -|f_4(\rho_4)-f_4(\rho_4^-)|
\end{align*}
We need to estimate changes in the flows on other links.  In particular, we consider two cases: \\
\noindent (a1) $D_4(\rho_4)> S_6(\rho_6^-)-D_5(\rho_5^-)$; and \\
\noindent (a2) $D_4(\rho_4)\leq S_6(\rho_6^-)- D_5(\rho_5^-)$.\\
 The left part of Figure \ref{figCase1234} provides an illustration of these two cases. In case (a1), we easily deduce that 
$$
\Delta TV(f)  ~=~ f_4(\rho_4^-) -D_4(\rho_4) + 0 - (f_4(\rho_4^-)-f_4(\rho_4))  ~=~0
$$
In case (a2), we have that 
\begin{align*}
\Delta TV(f)~=~& D_5(\rho_5^-)-(S_6(\rho_6^-)-f_4(\rho_4^-))+S_6(\rho_6^-)-(D_4(\rho_4)+D_5(\rho_5^-)) - (f_4(\rho_4^-)-f_4(\rho_4))     
\\
~=~&0
\end{align*}
We conclude that under case (a), $\Delta TV(f)=0$.\\

\begin{figure}[h!]
\centering
\includegraphics[width=\textwidth]{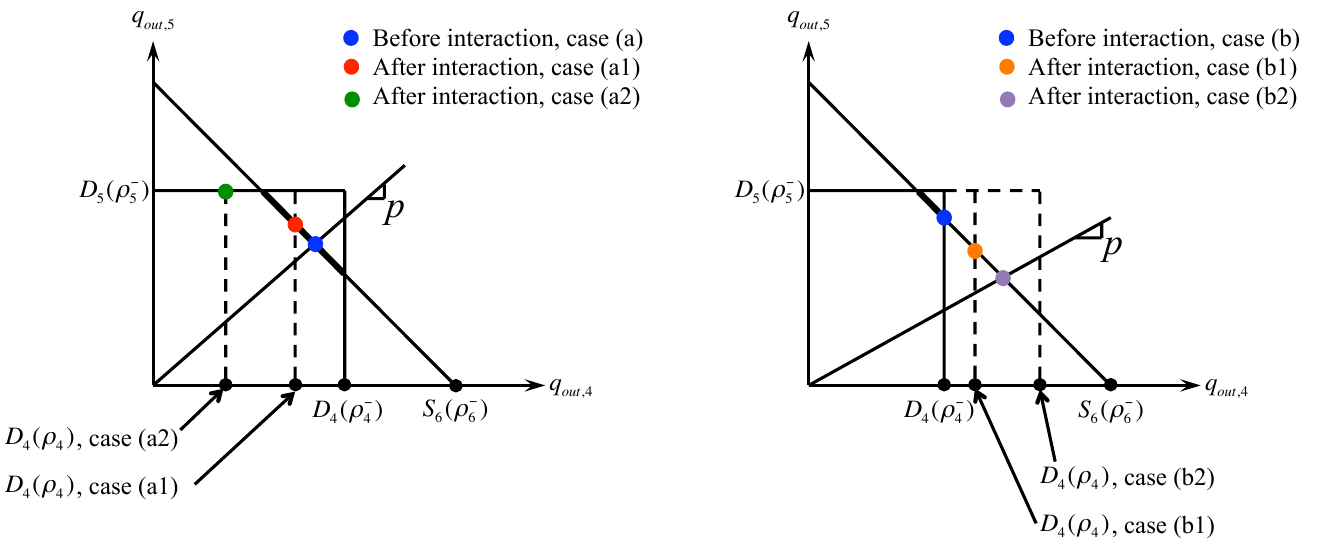}
\caption{Illustration of all the cases in the proof of Lemma \ref{existencemerge}.}
\label{figCase1234}
\end{figure}

\noindent{\bf (Part 1.b.)} We now turn to case (b) and deduce that $\rho_4^-$ is a bad datum. According to the direction of the wave, $\rho_4$ must also be a bad datum.  If $\rho_4<\rho_4^-$, then $D_4(\rho_4)=f_4(\rho_4)<f_4(\rho_4^-)$, which leads to the exact same situation as Case a) and we conclude that $\Delta TV(f)=0$. On the other hand, if $\rho_4^-<\rho_4<\sigma_4$ we deduce $D_4(\rho_4)>f_4(\rho_4^-)$ and $f_4(\rho_4)>f_4(\rho_4^-)$.  We again consider two cases:\\
\noindent (b1) $D_4(\rho_4)\leq S_6(\rho_6^-)/(1+p)$; and \\
\noindent (b2) $D_4(\rho_4)> S_6(\rho_6^-)/(1+p)$.\\
 See the right part of Figure \ref{figCase1234} for an illustration of these two cases. In both cases, we see that the inflow into link $I_6$ does not change after the interaction, thus
\begin{align*}
\Delta TV(f)~=~&  |f_5(\rho_5^+) -f_5(\rho_5^-)|+|f_6(\rho_6^+)-f_6(\rho_6^-)| +|f_4(\rho_4)-f_4(\rho_4^+)|-|f_4(\rho_4)-f_4(\rho_4^-)| 
\\
~=~&  |f_5(\rho_5^+) -f_5(\rho_5^-)|+ |f_4(\rho_4)-f_4(\rho_4^+)|-|f_4(\rho_4)-f_4(\rho_4^-)| 
\end{align*}
In case (b1), we have that $D_4(\rho_4)=f_4(\rho_4^+)> f_4(\rho_4)$ and thus $\rho_4^+=\rho_4$. Therefore,
\begin{align*}
\Delta TV(f)~=~&   D_4(\rho_4)-D_4(\rho_4^-) + |f_4(\rho_4)-f_4(\rho_4^+)|-(f_4(\rho_4)-f_4(\rho_4^-)) 
\\
~=~&  f_4(\rho_4^+)-f_4(\rho_4^-)+f_4(\rho_4^+)-f_4(\rho_4)-f_4(\rho_4)+f_4(\rho_4^-)~=~0
\end{align*}
In case (b2), we have that $f_4(\rho_4^+)<D_4(\rho_4)=f_4(\rho_4)$ and $\rho_4^+$ is a good datum. Consequently,
\begin{align*}
\Delta TV(f)~=~&  {S_6(\rho_6^-)\over 1+p}-f_4(\rho_4^-) + f_4(\rho_4)-f_4(\rho_4^+)-(f_4(\rho_4)-f_4(\rho_4^-))     
\\
~=~&{S_6(\rho_6^-)\over 1+p}  -f_4(\rho_4^+)~=~0
\end{align*}
We conclude that case (b) yields $\Delta TV(f)=0$ as well. This easily verifies  (P2) in the case of  wave coming from $I_4$ (or $I_5$).\\

\noindent{\bf (Part 3.)} We consider two cases\\
(i) $\rho_6^-$ is good datum; and \\
(ii) $\rho_6^-$ is bad datum. \\
In case (i), a wave can reach the junction from $I_6$ only if it has a negative speed, and hence, $\rho_6>\sigma_6$ and $f_6(\rho_6)<f_6(\rho_6^-)$. Before the interaction time $t$, we have
$$
\Gamma(t-)~=~f_6(\rho_6^-)~>~f_6(\rho_6)
$$
Therefore, after the interaction time the new solution given by the Riemann Solver is attained at $S_6(\rho_6)=f_6(\rho_6)$. Thus $\Gamma(t+)=f_6(\rho_6)$,  and $\rho_6^+=\rho_6$. We then have
$$
\Gamma(t+)-\Gamma(t-)~=~f_6(\rho_6)-f_6(\rho_6^-)
$$
By definition, the change in the total variation of flow is
$$
\Delta TV(f)~=~ |\Delta q_{out,4}| +|\Delta q_{out,5}| +|f_6(\rho_6)-f_6(\rho_6^+)| -|f_6(\rho_6)-f_6(\rho_6^-)|  
$$
Since the inflow of $I_6$ equals the sum of outflows of $I_4$ and $I_5$, the decrease in the junction flow brings decrease in the outflows of both $I_4$ and $I_5$, that is,
$$
|\Delta q_{out,4}|+|\Delta q_{out,5}|~=~|\Gamma(t-)-\Gamma(t+)|~=~f_6(\rho_6^-)-f_6(\rho_6)
$$ 
We thus easily verify  that $\Delta TV(f)=0$ in case (i). \\

\noindent In case (ii), $\rho_6^- > \sigma_6$ and hence $\rho_6\geq\sigma_6$. If $\rho_6>\rho_6^-$, then $f_6(\rho_6)<f_6(\rho_6^-)$ and we have exactly the same estimation as in case (i). Now assume that $\rho_6<\rho_6^-$, and thus $f_6(\rho_6)>f_6(\rho_6^-)=S_6(\rho_6^-)=\Gamma(t-)$. Consider 
two further cases: \\
(iia) $f_6(\rho_6)\leq D_4(\rho_4^-)+D_5(\rho_5^-)$; and \\
(iib) $f_6(\rho_6)> D_4(\rho_4^-)+D_5(\rho_5^-)$. \\
For (iia), we have that $\rho_6^+=\rho_6$, and the increase in the junction flow after the interaction is reflected by the simultaneous increase in exit flows from $I_4$ and $I_5$. In other words,
$$
|\Delta q_4|+|\Delta q_5|~=~f_6(\rho_6)-f_6(\rho_6^-)
$$
Similar to case (i), we must have $\Delta TV(f)=0$. \\
For (iib), we have 
\begin{equation}\label{iibeqn1}
\begin{array}{c}
f_6(\rho_6^+)~=~\Gamma(t+)~=~D_4(\rho_4^-)+D_5(\rho_5^-)~<~f_6(\rho_6)
\\
 f_6(\rho_6^-)~=~\Gamma(t-)~\leq~ D_4(\rho_4^-)+D_5(\rho_5^-)~<~f_6(\rho_6)
 \\
 f_6(\rho_6^-)~=~\Gamma(t-)~\leq~ D_4(\rho_4^-)+D_5(\rho_5^-)~=~f_6(\rho_6^+)
\end{array}
\end{equation}
In addition,
$$
\Delta\Gamma~=~\Gamma(t+)-\Gamma(t-)~=~f(\rho_6^+)-f(\rho_6^-)
$$
Finally, according to \eqref{iibeqn1} we have
\begin{align*}
\Delta TV(f)~=~&|f_4(\rho_4^+)-f_4(\rho_4^-)|+|f_5(\rho_5^+)-f_5(\rho_5^-)|+|f_6(\rho_6^+)-f_6(\rho_6)| - |f_6(\rho_6)-f_6(\rho_6^-)|
\\
~=~&|f_6(\rho_6^+)-f_6(\rho_6^-)|+|f_6(\rho_6^+)-f_6(\rho_6)| -|f_6(\rho_6)-f_6(\rho_6^-)|
\\
~=~&f_6(\rho_6^+)-f_6(\rho_6^-)+f_6(\rho_6)-f_6(\rho_6^+) -f_6(\rho_6)+f_6(\rho_6^-)
\\
~=~&0
\end{align*}

\noindent {(\bf Part 4.)} We verify {\bf (P3)} here. Notice that, by the definition of demand and supply, a wave bringing a decrease in the flow also reduces the corresponding demand (for incoming links) and supply (for outgoing links) of the relevant link, and thereby decreases the flow through the junction after the interaction, according to Remark \ref{remarkmergemono}.

Finally, with all three properties verified, Theorem 5.1 of \cite{GP2009} asserts that a weak solution exists at the merge junction. 
\end{proof}

%%%%%%%%%%%%%%%%%%%%%%%%%%%%%%%%%%%%
%%%%%%%%%%%%%%%%%%%%%%%%%%%%%%%%%%%%
%%%%%%%%%%%%%%%%%%%%%%%%%%%%%%%%%%%%
%%%%%%%%%%%%%%%%%%%%%%%%%%%%%%%%%%%%

\end{document}